\newtheorem{theorem}{Theorem}[section]
\newtheorem{lemma}[theorem]{Lemma}
\newtheorem{proposition}[theorem]{Proposition}
\newtheorem{corollary}[theorem]{Corollary}
\theoremstyle{definition}
\newtheorem{definition}[theorem]{Definition}
\newtheorem{example}[theorem]{Example}
\theoremstyle{remark}
\numberwithin{equation}{section}
\newcommand{\diam}{{\rm diam}\,}
\newcommand{\Ext}{{\rm Ext}\,}
\newcommand{\sgn}{{\rm sgn}\,}
\newcommand{\conv}{{\rm conv}\,}
\begin{document}

\title[Approximate smoothness]{Approximate smoothness\\ 
in normed linear spaces}

\author[J. Chmieli\'{n}ski, D. Khurana and D. Sain]{Jacek Chmieli\'{n}ski, Divya Khurana and Debmalya Sain}

\address[Chmieli\'{n}ski]{Department of Mathematics\\ Pedagogical
University of Krakow\\ Podchor\c{a}\.{z}ych 2, 30-084 Krak\'{o}w\\
Poland} \email{jacek.chmielinski@up.krakow.pl}

\address[Khurana]{Humanities and Applied Sciences\\ IIM Ranchi\\Suchana Bha\-wan, Audrey House Campus, Meur’s Road, Ranchi\\ Jharkhand-834008 \\India}
\email{divyakhurana11@gmail.com}

\address[Sain]{Departamento de Analisis Matematico\\ Facultad de Ciencas\\Universidad de Granada\\ Avenida de la Fuente Nueva\\ S/N, C.P: 18071, Granada\\Spain} 
\email{saindebmalya@gmail.com}

\thanks{}


\subjclass[2010]{Primary 46B20, Secondary 47L05, 51F20, 
52B11.
}

\keywords{smoothness; rotundity; approximate smoothness; approximate rotundity; Birkhoff-James orthogonality; approximate Birkhoff-James orthogonality; supporting hyperplanes; polyhedral spaces; direct sums.}

\begin{abstract}
We introduce the notion of approximate smoothness in a normed linear space. We characterize this property and show the connections between smoothness and approximate smoothness for some spaces. As an application, we consider in particular the Birkhoff-James orthogonality and its right-additivity under the assumption of approximate smoothness. 
\end{abstract}

\maketitle

\section{Introduction}

Smoothness is definitely one of the most important geometrical properties of normed linear spaces (cf. monographs \cite{Day,JL,Megginson} or a survey \cite{FMZ} for example). In particular, some natural attributes of the Birkhoff-James orthogonality relation can be derived for smooth spaces. Since in its full strength, smoothness can be sometimes a too much restrictive assumption, we are going to propose somehow relaxed, approximate version of this property. We study this concept in general, as well as for some particular linear normed spaces,  polyhedral Banach spaces and direct sums of normed linear spaces.

\subsection{Notations}
Throughout the text, we use the symbols
$X,Y,Z$ to denote real normed linear spaces. The zero vector of a normed linear space is denoted by $\theta$, but in case of the scalar field $\mathbb{R}$, we simply use the symbol 0. By $B_X:=\{x\in X: \|x\|\leq1\}$ and $S_X:=\{x\in X: \|x\|=1\}$ we denote the unit ball and the unit sphere of $X$, respectively. The collection of all extreme points of $B_X$ will be denoted as $\Ext B_X$. For $A\subset X$, $\diam A:=\sup\limits_{x,y\in A}\|x-y\|$ denotes the diameter of $A$.

Let $X^*$ denote the dual space of $X$. Given $x\in X\smallsetminus\{\theta\}$, a functional $f\in S_{X^*}$ is said to be a {\em supporting functional at} $x$ if $f(x)=\|x\|$. The collection of all supporting functionals at $x$ will be denoted by $J(x)$, i.e.,
$$
J(x):=\{f\in S_{X^{*}}:\ f(x)=\|x\|\},\qquad x\in X\smallsetminus\{\theta\}.
$$
The Hahn--Banach theorem guarantees that the set $J(x)$ is always nonempty and it is easy to see that it is also convex.  It is also known that $J(x)$ is $w^*$-compact. An element $x\in X\smallsetminus\{\theta\}$ is said to be a {\em smooth point} if $J(x)$ is a singleton (i.e., $J(x)=\{f\}$ for a unique $f\in S_{X^*}$). A normed linear space $X$ is called {\it smooth} if every $x\in S_X$ (hence every $x\in X\smallsetminus\{\theta\}$) is a smooth point.

For $f\in X^*\smallsetminus\{\theta\}$, we write 
$$
M_{f}:=\{x\in S_{X}: |f(x)|=\|f\|\}
$$ 
and 
$$
M_{f}^{+}:=\{x\in S_{X}: f(x)=\|f\|\}.
$$
Given two elements $x,y\in X$, let 
$$
\overline{xy}{:=}\mbox{conv}\,\{x,y\}=\{(1-t)x+ty:t\in[0,1]\}
$$ 
denote the closed line segment joining $x$ and $y$.
By $\mathcal{R}(X)$ we will denote the length of the ``longest'' line segment lying on a unit sphere (cf. \cite{SW,CKS}); more precisely, 
$$
\mathcal{R}(X):=\sup\{\|x-y\|:\overline{xy}\subset S_X\}.
$$
By a hyperplane we mean a set
$$
H{:=}\{x\in X:\ f(x)=c\}
$$
where $f\in X^*\smallsetminus\{\theta\}$ is a functional and $c\in\mathbb{R}$ a constant. Each hyperplane $H\subset X$ divides $X$ into two closed half-spaces whose intersection is $H$ itself. We call $H$ a \emph{supporting hyperplane to the unit ball}, if $B_X$ lies entirely within one of the two half-spaces and $H\cap B_X\neq\emptyset$. Equivalently, $H$ is a supporting hyperplane for $B_X$ if and only if there exists $f\in S_{X^*}$ such that 
$H=\{x\in X:\ f(x)=1\}$. Notice that then we have $H\cap S_X=M_f^+$.

By $\mathfrak{H}(X)$ we denote the set of all supporting hyperplanes for the unit ball in $X$. Now, we introduce the notion  
$$
\mathcal{S}(X){:=}\sup\{\diam (H\cap S_X):\ H\in \mathfrak{H}(X)\}
$$
and by previous observations we have
\begin{equation}\label{S(X)}
\mathcal{S}(X)=\sup\{\diam M_f^+:\ f\in S_{X^*}\}.
\end{equation}

Actually, by using the fact that the diameter a of convex set is the supremum of lengths of segments inside the set and both $H\cap S_X$ (for $H\in \mathfrak{H}(X)$) and $M^{+}_{f}$ are convex sets, it follows that \begin{equation}\label{R(X)}
\mathcal{S}(X)=\mathcal{R}(X).
\end{equation}
 We define $d:X\smallsetminus \theta \longrightarrow [0,2]$ by $d(x)=\diam J(x).$
We also set 
$$\mathcal{E}(X){:=}\sup\{{d(x)}:\ x\in S_X\}.
$$

By $\Psi: X\longrightarrow X^{**}$  we denote the  canonical embedding of a normed linear space $X$ into its bidual $X^{**}$, that is
$$
\Psi(x)(f){:=}f(x),\qquad x\in X, f\in X^*.
$$
It is known that $\Psi$ is a linear isometry and if it is surjective ($\Psi(X)=X^{**}$), then $X$ is called a {\em reflexive} space (necessarily a Banach space). 

By using the observation that $\Psi(M_{f}^{+})=J(f)\cap \Psi(X)$ and the fact that $\Psi$ is an isometry we notice that, for any normed linear space $X$:
\begin{equation}\label{relation between e and s *}
\mathcal{E}(X)\leq \mathcal{S}(X^*) \quad \mbox{and}\quad  \mathcal{S}(X)\leq \mathcal{E}(X^*).
\end{equation}

In particular we have:

\begin{equation}\label{relation between e and s}
\mathcal{E}(X)\leq \mathcal{S}(X^*)\leq \mathcal{E}(X^{**}).
\end{equation}

An $n$-dimensional Banach space $X$ is said to be a {\it polyhedral Banach space} if  $B_X$ contains only finitely many extreme points or, equivalently, if $S_X$ is a polyhedron. For more details on polyhedral Banach spaces see \cite{SPBB}.   

We define the sign function on $\mathbb{R}$ by
$$
\sgn t:=\frac{t}{|t|},\quad t\in\mathbb{R}\smallsetminus\{0\}\quad \mbox{and}\quad \sgn 0:=0.
$$ 
Let $(X,\|\cdot\|_X)$ and $(Y, \|\cdot\|_Y)$ be normed linear spaces. Then 
$$
X\oplus_p Y:=\{(x,y):x\in X, y\in Y\},\qquad 1\leq p \leq \infty,
$$ 
is a normed linear space with respect to the following norm:
$$
\|(x,y)\|_p :=
\begin{cases}
(\|x\|_X^p+\|y\|_Y^p)^{\frac{1}{p}}& \text{if $p<\infty$} \\
\max\{\|x\|_X,\|y\|_Y\} & \text{if $p=\infty$.} 
\end{cases}
$$

Let $1\leq q\leq\infty$ be conjugated to $p$, i.e., such that $\frac{1}{p}+\frac{1}{q}=1$ for $1<p<\infty$, $q=1$ for $p=\infty$ and $q=\infty$ for $p=1$. Then the dual space of $X\oplus_p Y$ can be isometrically identified with $X^*\oplus_q Y^*$ in the following sense:
for each $F\in (X\oplus_p Y)^*$ there exist a unique $(f,g)\in X^*\oplus_q Y^*$ such that $\|F\|=\|(f,g)\|$ and $F((x,y))=f(x)+g(y)$ for all $(x,y)\in X\oplus_p Y$ (compare \cite[p. 5]{JL} or \cite[Definition 0.18 and Proposition 0.19]{Dulst}).

\subsection{Norm derivatives}

It is well known that smoothness of $x\in S_X$ is
equivalent to the G\^{a}teaux differentiability of the norm at $x$. The concept of norm derivatives arises naturally from the two-sided limiting nature of the G\^{a}teaux
derivative of the norm. Let us recall the following definition and basic properties of one-sided norm derivatives.

\begin{definition}\label{def_norm_der}
Let $X$ be a normed linear space and $x,y\in X$. The norm derivatives of $x$ in the direction of $y$ are defined as
\begin{align*}
\rho'_{+}(x,y):=\|x\|\lim_{\lambda\rightarrow 0^{+}}\frac{\|x+\lambda
y\|-\|x\|}{\lambda}=\lim_{\lambda\rightarrow 0^{+}}\frac{\|x+\lambda y\|^2-\|x\|^2}{2\lambda},\\
\rho'_{-}(x,y):=|x\|\lim_{\lambda\rightarrow
0^{-}}\frac{\|x+\lambda y\|-\|x\|}{\lambda}=\lim_{\lambda\rightarrow 0^{-}}\frac{\|x+\lambda y\|^2-\|x\|^2}{2\lambda}.
\end{align*}
\end{definition}

The following properties of norm derivatives will be useful in this note (see \cite{AST} and \cite{D1} for proofs):

\begin{itemize}
\item[($\rho$-i)]For all $x$, $y\in X$ and all $\alpha\in\mathbb{R}$,
\[\rho'_\pm(\alpha x, y)=\rho'_\pm(x,\alpha y) = \left\{
    \begin{array}{ll}
        \alpha \rho'_\pm(x,y)  & \mbox{if } \alpha\geq 0 \\
       \alpha \rho'_\mp(x,y)  & \mbox{if } \alpha< 0.
    \end{array}
\right.\]

\item[($\rho$-ii)] $\rho'_{-}(x,y)\leq \rho'_{+}(x,y)$. Moreover, $x\in X\smallsetminus\{\theta\}$ is smooth if and only if $\rho'_{-}(x,y)= \rho'_{+}(x,y)$ for all $y\in X$.

\item[($\rho$-iii)] $\rho'_{+}(x,y)=\|x\|\sup\{f(y):f\in J(x)\}$.

\item[($\rho$-iv)] $\rho'_{-}(x,y)=\|x\|\inf\{f(y):f\in J(x)\}$.
\end{itemize}

\subsection{Birkhoff-James orthogonality}

For vectors $x$ and $y$ in a nor\-med space $X$, we say that $x$ is {\it Birkhoff-Ja\-mes orthogonal} (BJ-ortho\-go\-nal) to $y$, written as $x\perp_B y$, if 
$$
\|x+\lambda y\|\geq \|x\|,\qquad \mbox{for all}\ \lambda\in \mathbb{R}
$$
(cf. \cite{B, J1, J}). James in \cite[Theorem 2.1]{J1} proved that if $x \in X\smallsetminus\{\theta\}$ and $y\in X$, then $x\perp_B y$ if and only if there exists $f\in J(x)$ such that $f(y)=0$.

Chmieli\'{n}ski in \cite{C} defined an approximate Birkhoff-James orthogonality
as follows. Given $x,y \in X$ and $\varepsilon\in
[0,1)$, $x$ is said to be approximately orthogonal to $y$, written
as $x\perp_B^\varepsilon y$, if 
\begin{equation}\label{approx_ort}
\|x+\lambda y\|^2\geq \|x\|^2-2\varepsilon\|x\|\|\lambda y\|,\qquad \mbox{for all}\ \lambda \in
\mathbb{R}.
\end{equation} 
Later, in \cite[Theorems 2.2 and 2.3]{CSW}, Chmieli\'{n}ski et al.
gave two characterizations of this approximate orthogonality:
\begin{eqnarray}
x\perp_B^\varepsilon y &\Longleftrightarrow& \exists\,z\in \mbox{span}\{x,y\}:\  x\perp_B z,\ \|z-y\|\leq\varepsilon \|y\|;\\ \label{Approximate_Characterization}
& \Longleftrightarrow& \exists\,f\in
J(x):\ |f(y)|\leq \varepsilon \|y\|. \label{Bchar}
\end{eqnarray}

Obviously for any $f$ in $J(x)$ we have $|f(y)|\leq \|y\|$. If $|f(y)|<\|y\|$ then there exists $\varepsilon\in [0,1)$ such that $|f(y)|=\varepsilon\|y\|$ whence $x\perp_B^{\varepsilon}y$. In particular, it follows from \eqref{Bchar} for $x,y\in S_X$:
\begin{equation}\label{Bchar2}
x\perp_B^\varepsilon y\ \mbox{for some}\ \varepsilon\in [0,1)  \ \Longleftrightarrow \ y\not\in M_f\ \mbox{for some}\ f\in J(x)
\end{equation}
or equivalently
\begin{equation}\label{Bchar3}
x\not\perp_B^\varepsilon y\ \mbox{for all}\ \varepsilon\in [0,1)  \ \Longleftrightarrow \ y\in M_f\ \mbox{for all}\ f\in J(x).
\end{equation}

The study of approximate orthogonality has been an active area of research in recent times, particularly in the space of bounded linear operators on a Banach space (see \cite{CSW,MPRS,KDM1, SKM} for recent study on approximate orthogonality). Since Birkhoff-James orthogonality is closely related to the classical notion of smoothness in a normed space, the above works motivated us to introduce a suitable notion of approximate smoothness in normed spaces which will be in some sense compatible with approximate orthogonality.

In \cite{J1}, James obtained a characterization of smooth points in terms of right-additivity of the Birkhoff-James orthogonality relation. Na\-me\-ly, $x\in X\smallsetminus\{\theta\}$ is a smooth point in $X$ if and only if $\perp_B$ is right-additive at $x$, that is for any $y,z \in X$:
$$
x\perp_B y\ \ \mbox{and}\ \ x\perp_B z\ \implies\ x\perp_B (y+z).
$$ 

We now define the notion of an approximate right-additivity of the BJ-orthogonality.

\begin{definition}\label{d1}
Let $X$ be a normed linear space and $x\in X\smallsetminus\{\theta\}$. 
We say that the BJ-orthogonality is \textit{approximately right-additive} at $x$ ($\varepsilon$-right-additive for some $\varepsilon\in[0,1)$) if 
for any $y,z \in X$:
$$
x\perp_B y\ \ \mbox{and}\ \ x\perp_B z\ \implies\ x\perp_B^{\varepsilon} (y+z).
$$
\end{definition}

Similarly we define the right-additivity of the approximate BJ-or\-tho\-go\-na\-li\-ty.

\begin{definition}\label{d2}
We say that the approximate BJ-orthogonality is \textit{right-additive} at $x$ if whenever $x\perp_B^\varepsilon y$, $x\perp_B^\varepsilon z$ for some $y,z \in X$ and $\varepsilon\in[0,1)$, then there exists some $\varepsilon_1\in[0,1)$ such that $x\perp_B^{\varepsilon_1} (y+z)$.
\end{definition}

Observe that in Definition~\ref{d1} for a given $x\in X\smallsetminus\{\theta\}$, $\varepsilon$ is uniform in the sense that it is independent on the choice of $y$ and $z$ and in the Definition~\ref{d2} for given $x,y,z$ and $\varepsilon$, $\varepsilon_1$ may depend on all of them. 

The connections between smoothness and right-additivity of the BJ-orthogonality induce a natural question on a condition, weaker than smoothness, which would characterize the approximate right-additivity of the BJ-orthogonality (or right-additivity of the approximate BJ-orthogonality). This is one of the motivations that lead to the notion of approximate smoothness which is stated in the next section.

\section{Approximate smoothness and rotundity}

\subsection{Approximate smoothness -- definition and basic properties}
It is obvious that for any $x\in X\smallsetminus\{\theta\}$, $0\leq {d(x)}\leq 2$ and $d(x)=0$ if and only if $x$ is a smooth point. We will be considering the case when the set $J(x)$ is not necessarily a singleton, but its diameter is limited, in particular strictly less than $2$. 
\begin{definition}\label{d3}
Let $X$ be a normed linear space, $x\in X\smallsetminus\{\theta\}$ and $\varepsilon\in [0,2)$. We say that $x$ is {\em $\varepsilon$-smooth} if ${d(x)}\leq\varepsilon$. When the value of $\varepsilon$ is not specified we say that $x$ is {\em approximately smooth}.
The space $X$ is said to be approximately smooth ($\varepsilon$-smooth) if each $x\in S_X$ is $\varepsilon_x$-smooth for some $\varepsilon_x\leq \varepsilon<{2}$. 
\end{definition}

Observe that in the above definition we intentionally excluded the possibility of $\varepsilon=2$. Otherwise, every non-zero element of a normed space is $\varepsilon$-smooth and every normed space is $\varepsilon$-approximately smooth for some $\varepsilon\in[0,2]$. However, our motivation is to distinguish approximately smooth points and spaces, so we restrict to $\varepsilon<2$.

The following lemma will be useful for obtaining a characterization of approximate smoothness in terms of norm derivatives.

\begin{lemma}\label{d(x) equality}
Let $X$ be a normed linear space, $x\in X\smallsetminus\{\theta\}$. Then  
$$\sup\limits_{y\in S_X}\{\rho'_+(x,y)-\rho'_{-}(x,y)\}=d(x) \|x\|.$$
\end{lemma}

\begin{proof}
Using the properties ($\rho$-iii) and ($\rho$-iv), for an arbitrary $y\in S_X$ we have 
\begin{align*}
 \rho'_+(x,y)-\rho'_{-}(x,y)&=\|x\|\{\sup_{f\in J(x)}f(y)-\inf_{g\in J(x)}g(y)\}\\
 &=\|x\| \sup_{f,g\in J(x)}\{f(y)-g(y)\}.
\end{align*}
Hence $\sup\limits_{y\in S_X}\{\rho'_+(x,y)-\rho'_{-}(x,y)\}= d(x) \|x\|$.
\end{proof}

We  now prove the said characterization.

\begin{lemma}\label{equivalence}
Let $X$ be a normed linear space, $x\in X\smallsetminus\{\theta\}$ and $\varepsilon\in[0,2)$. Then the following conditions are equivalent:
\begin{itemize}
    \item[(i)] $x$ is $\varepsilon$-smooth.
    \item[(ii)] $\sup\limits_{y\in S_X}\{\rho'_+(x,y)-\rho'_{-}(x,y)\}\leq \varepsilon \|x\|$.
\end{itemize}
\end{lemma}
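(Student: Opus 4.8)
The plan is to show directly that the quantity appearing in (ii) equals $\|x\|\,\diam J(x)$, after which the equivalence of (i) and (ii) is immediate from the definition of $\varepsilon$-smoothness.

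First I would rewrite the difference $\rho'_+(x,y)-\rho'_-(x,y)$ using the representations of the one-sided norm derivatives recalled before the statement. By ($\rho$-iii) and ($\rho$-iv),
\[
\rho'_+(x,y)-\rho'_-(x,y)=\|x\|\left(\sup_{f\in J(x)}f(y)-\inf_{g\in J(x)}g(y)\right)
=\|x\|\sup_{f,g\in J(x)}\bigl(f(y)-g(y)\bigr),
\]
where the last equality holds because the supremum and the infimum range over the same set $J(x)$, so subtracting the infimum is the same as taking a supremum of the difference over pairs.

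Next I would take the supremum over $y\in S_X$ and interchange the order of the two suprema (legitimate, since a supremum over a product set equals the iterated supremum in either order):
\[
\sup_{y\in S_X}\bigl\{\rho'_+(x,y)-\rho'_-(x,y)\bigr\}
=\|x\|\sup_{f,g\in J(x)}\ \sup_{y\in S_X}\bigl(f-g\bigr)(y).
\]
For fixed $f,g\in J(x)$ the functional $f-g$ belongs to $X^*$, and since $S_X$ is symmetric ($y\in S_X\iff -y\in S_X$) one has $\sup_{y\in S_X}(f-g)(y)=\sup_{y\in S_X}|(f-g)(y)|=\|f-g\|$. Substituting and recalling that $\diam J(x)=\sup_{f,g\in J(x)}\|f-g\|$ yields
\[
\sup_{y\in S_X}\bigl\{\rho'_+(x,y)-\rho'_-(x,y)\bigr\}=\|x\|\,\diam J(x).
\]
From this identity, condition (i), namely $\diam J(x)\leq\varepsilon$, is equivalent to condition (ii), namely $\sup_{y\in S_X}\{\rho'_+(x,y)-\rho'_-(x,y)\}\leq\varepsilon\|x\|$.

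There is no serious obstacle here; the argument is a direct computation. The only points requiring care are the interchange of the two suprema and the identification $\sup_{y\in S_X}(f-g)(y)=\|f-g\|$, both of which are routine once one notes that $J(x)$ is a bounded subset of $S_{X^*}$, so every quantity involved is finite and the manipulations are justified.
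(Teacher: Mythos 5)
Your proof is correct and follows essentially the same route as the paper's: both rest on properties ($\rho$-iii) and ($\rho$-iv), the rewriting of $\sup_f f(y)-\inf_g g(y)$ as a supremum over pairs, and the identification $\sup_{y\in S_X}(f-g)(y)=\|f-g\|$. The only difference is organizational --- you establish the single identity $\sup_{y\in S_X}\{\rho'_+(x,y)-\rho'_-(x,y)\}=\|x\|\,\diam J(x)$, whereas the paper splits the same computation into the two inequalities needed for (i)$\Rightarrow$(ii) and (ii)$\Rightarrow$(i).
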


\begin{proof}
(i)$\implies$ (ii) follows from Lemma~\ref{d(x) equality}.

Now we prove (ii)$\implies$(i). Let $f,g\in J(x)$ and $y\in S_X$. Then $\rho'_+(x,y)\geq \|x\| f(y)$, $\rho'_+(x,y)\geq \|x\| g(y)$, $\rho'_-(x,y)\leq \|x\| f(y)$ and  $\rho'_-(x,y)\leq \|x\| g(y)$. Thus,
\begin{align*}
\sup\limits_{y\in S_X}\{\rho'_+(x,y)-\rho'_{-}(x,y)\}\geq \|x\|\sup_{y\in S_X}|f(y)-g(y)|=\|x\|\|f-g\|. 
\end{align*}
Hence, (ii) yields $\|x\|\,\|f-g\|\leq\varepsilon\|x\|$ and this proves (i).
\end{proof}

As an immediate application of Lemma~\ref{equivalence} we obtain a characterization of the approximate smoothness of the whole space.

\begin{corollary}
Let $X$ be a normed linear space and $\varepsilon\in[0,2)$. Then the following conditions are equivalent:
\begin{itemize}
    \item[(i)] $X$ is $\varepsilon$-smooth.
    \item[(ii)] $\sup\limits_{x,y\in S_X}\{\rho'_+(x,y)-\rho'_{-}(x,y)\}\leq \varepsilon$.
\end{itemize}
\end{corollary}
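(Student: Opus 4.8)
The plan is to deduce the corollary directly from Lemma~\ref{equivalence} by observing that the condition stated for the whole space $X$ is simply the uniform version of the pointwise condition, obtained by taking a supremum over all $x\in S_X$. First I would unwind the definition of $\varepsilon$-smoothness of the space: $X$ is $\varepsilon$-smooth precisely when $\diam J(x)\leq\varepsilon$ for every $x\in S_X$, i.e.\ when each unit vector $x$ is $\varepsilon$-smooth in the pointwise sense. Indeed, if each $x\in S_X$ is $\varepsilon_x$-smooth for some $\varepsilon_x\leq\varepsilon$, then $\diam J(x)\leq\varepsilon_x\leq\varepsilon$; conversely, $\diam J(x)\leq\varepsilon$ permits the choice $\varepsilon_x:=\diam J(x)$.

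For the implication (i)$\implies$(ii) I would fix an arbitrary $x\in S_X$. Since $x$ is then $\varepsilon$-smooth and $\|x\|=1$, the implication (i)$\implies$(ii) of Lemma~\ref{equivalence} gives $\sup_{y\in S_X}\{\rho'_+(x,y)-\rho'_-(x,y)\}\leq\varepsilon\|x\|=\varepsilon$. As this bound is independent of $x$, taking the supremum over $x\in S_X$ yields $\sup_{x,y\in S_X}\{\rho'_+(x,y)-\rho'_-(x,y)\}\leq\varepsilon$, which is exactly (ii).

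For the converse (ii)$\implies$(i) I would again fix $x\in S_X$. From the double supremum in (ii) I would extract, for this fixed $x$, the bound $\sup_{y\in S_X}\{\rho'_+(x,y)-\rho'_-(x,y)\}\leq\varepsilon=\varepsilon\|x\|$, and then apply the implication (ii)$\implies$(i) of Lemma~\ref{equivalence} to conclude that $x$ is $\varepsilon$-smooth, i.e.\ $\diam J(x)\leq\varepsilon$. Since $x\in S_X$ was arbitrary, every unit vector is $\varepsilon$-smooth, which by the observation in the first paragraph means that $X$ is $\varepsilon$-smooth.

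Since both directions are immediate specialisations (respectively uniformisations) of the already-established Lemma, I do not expect any genuine obstacle here. The only point requiring minor care is the bookkeeping between the ``$\exists\,\varepsilon_x\leq\varepsilon$'' formulation appearing in the definition of a space being $\varepsilon$-smooth and the equivalent uniform bound $\sup_{x\in S_X}\diam J(x)\leq\varepsilon$, which is why I single it out at the very start of the argument.
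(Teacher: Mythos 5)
Your argument is correct and is exactly what the paper intends: the corollary is stated there without proof as an immediate consequence of Lemma~\ref{equivalence}, obtained by applying the lemma pointwise to each $x\in S_X$ (where $\|x\|=1$) and taking the supremum over $x$. Your extra care about the ``$\exists\,\varepsilon_x\leq\varepsilon$'' formulation versus the uniform bound is a reasonable bookkeeping remark but does not change the substance.
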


 If $X$ is a reflexive Banach space then we notice that $\Psi(M_{f}^{+})=J(f)$ for $f\in X^*\smallsetminus\{\theta\}$.
Since $\Psi$ is an isometry so it follows that $f\in X^*\smallsetminus\{\theta\}$ is $\varepsilon$-smooth if and only if $\diam M_{f}^{+}\leq \varepsilon$. Also, for a reflexive Banach space $X$ by using (\ref{relation between e and s}) and $\mathcal{E}(X)=\mathcal{E}(X^{**})$ we get 
\begin{align}
X  \mbox{~is~} \varepsilon \mbox{-smooth if and only if~} \mathcal{S}(X^*)\leq\varepsilon. 
\end{align}

Now, we consider a finite-dimensional space. We show that if each vector on a unit sphere is approximately smooth (not necessarily with the same approximation constant), then so is the whole space. 
Basically, this result is a consequence of Weierstrass compactness theorem.

\begin{theorem}\label{local-global-appr-smooth}
Let $X$ be a finite-dimensional Banach space such that each $x\in S_X$ is $\varepsilon_x$-smooth for some $\varepsilon_x\in[0,2)$. Then $X$ is approximately smooth. 
\end{theorem}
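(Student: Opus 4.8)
The plan is to upgrade the pointwise hypothesis (each $x$ satisfies $\diam J(x)<2$) to the uniform statement $\sup_{x\in S_X}\diam J(x)<2$, which is exactly what approximate smoothness of $X$ demands. Since a supremum of quantities each strictly below $2$ can still equal $2$, the content lies in ruling this out, and the natural tool is the compactness furnished by finite-dimensionality of both $X$ and $X^*$. I would argue by contradiction: as $\diam J(x)\le 2$ always holds, failure of approximate smoothness means $\sup_{x\in S_X}\diam J(x)=2$, so there is a sequence $(x_n)\subset S_X$ with $\diam J(x_n)\to 2$. Each $J(x_n)$ is a closed and bounded, hence compact, subset of the finite-dimensional space $X^*$, so its diameter is attained; thus I may choose $f_n,g_n\in J(x_n)$ with $\|f_n-g_n\|=\diam J(x_n)\to 2$.

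Next I would pass to the limit. Finite-dimensionality makes $S_X$ compact, so after extracting a subsequence $x_n\to x_0\in S_X$. Likewise $S_{X^*}$ is compact, so refining the subsequence further I may assume $f_n\to f_0$ and $g_n\to g_0$ with $f_0,g_0\in S_{X^*}$. The crucial point is to confirm that the limiting functionals still support the limiting point, i.e. that $f_0,g_0\in J(x_0)$. This follows from $f_n(x_n)=\|x_n\|=1$ together with the estimate
$$
|f_n(x_n)-f_0(x_0)|\le \|f_n-f_0\|\,\|x_n\|+\|f_0\|\,\|x_n-x_0\|,
$$
whose right-hand side tends to $0$; hence $f_0(x_0)=1=\|x_0\|$, so $f_0\in J(x_0)$, and symmetrically $g_0\in J(x_0)$.

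Finally, $\|f_0-g_0\|=\lim_n\|f_n-g_n\|=2$ yields $\diam J(x_0)\ge 2$, that is $\diam J(x_0)=2$, which contradicts the assumption that $x_0$ is $\varepsilon_{x_0}$-smooth for some $\varepsilon_{x_0}<2$. Consequently $\sup_{x\in S_X}\diam J(x)=:\varepsilon<2$, and $X$ is $\varepsilon$-smooth, hence approximately smooth. The only delicate step is this joint passage to the limit guaranteeing that $f_0,g_0$ remain supporting functionals at $x_0$; the remainder is routine sequential compactness valid precisely because $X$ is finite-dimensional, and I would expect this limiting argument to be where care is needed.
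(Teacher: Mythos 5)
Your proposal is correct and follows essentially the same route as the paper: a contradiction argument using sequential compactness of $S_X$ and $S_{X^*}$, passing to limits $f_n\to f_0$, $g_n\to g_0$, $x_n\to x_0$ and verifying $f_0,g_0\in J(x_0)$ to contradict $\diam J(x_0)<2$. The only cosmetic difference is that you select $f_n,g_n$ attaining $\diam J(x_n)$ exactly (using compactness of $J(x_n)$), whereas the paper picks them within $1/n$ of the diameter; both work equally well.
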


\begin{proof}
Let $x\in S_X$. Without loss of generality we assume that $\varepsilon_x=d(x)$. 

Let $\varepsilon:=\sup_{x\in S_X}\varepsilon_x$. Obviously, $\varepsilon\leq 2$ and suppose, contrary to our claim, that $\varepsilon=2$. Then we could find $\{x_n\}\subset S_X$ such that $\varepsilon_{x_n}\nearrow 2$. Also, for each $n\in\mathbb{N}$ we can choose $f_n,g_n\in J(x_n)$ such that $\|f_n-g_n\|>\varepsilon_{x_n}-\frac{1}{n}$.

 Now, using the compactness of $S_X$, $S_{X^*}$ and the fact that $\mathcal{A}=\{(x,f,g):x\in S_X, f,g\in J(x)\}$ is a closed set in $S_{X}\times S_{X^*}\times S_{X^*}$ we can find a convergent subsequence of $\{(x_n,f_n,g_n)\}$ which we again denote by $\{(x_n,f_n,g_n)\}$. Let $(x,f,g)\in\mathcal{A}$ be the limit of $\{(x_n,f_n,g_n)\}$. Let $h:X\times X^*\times X^*\longrightarrow \mathbb{R}$ be defined by $h((x,f,g))=\|f-g\|$. Then $h$ is clearly a continuous function. Thus $\lim_{n \to \infty}h((x_n,f_n,g_n))=h((x,f,g))$ and this implies that $\|f-g\|=2$. This contradicts our assumption that $\varepsilon_x\in[0,2)$, whence $\varepsilon<2$ and $X$ is $\varepsilon$-smooth.

\end{proof}

A normed space is smooth if and only each of its two-dimensional subspaces is smooth (cf. \cite[Proposition 5.4.21]{Megginson}). An analogous result can be proved for approximate smoothness. However, if the approximation constant $\varepsilon$ is not fixed we can prove it merely for finite-dimensional spaces. The authors do not know whether this assumption is essential.

\begin{theorem}\label{two dimensional subspace}
\ 
\begin{itemize}
\item [(i)] Let $X$ be a normed linear space and $\varepsilon\in[0,2)$. Then $X$ is $\varepsilon$-smooth if and only if each of its two-dimensional subspaces is $\varepsilon$-smooth.
\item [(ii)] Let $X$ be a normed linear space. If $X$ is approximately smooth then so is each of its two-dimensional subspaces. Moreover, if dimension of  $X$ is finite and each of its two-dimensio\-nal subspaces is approximately smooth then $X$ is approximately smooth.
\end{itemize}
\end{theorem}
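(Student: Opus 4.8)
The plan is to reduce everything to the pointwise characterisation of $\varepsilon$-smoothness via norm derivatives (Lemma~\ref{equivalence}), exploiting the elementary but crucial fact that the quantity $\rho'_+(x,y)-\rho'_-(x,y)$ depends only on the subspace $\operatorname{span}\{x,y\}$. Indeed, for a subspace $Y\subseteq X$ with the induced norm and $x,y\in Y$, the value $\rho'_\pm(x,y)$ is computed solely from the numbers $\|x+\lambda y\|$, $\lambda\in\mathbb{R}$, which are the same whether measured in $Y$ or in $X$; hence the norm derivatives of $x$ in the direction $y$ agree in $Y$ and in $X$. Combining the two inequalities appearing in the proof of Lemma~\ref{equivalence}, one moreover gets for $x\in S_X$ the exact identity $\diam J(x)=\sup_{y\in S_X}\{\rho'_+(x,y)-\rho'_-(x,y)\}$, and the identical statement holds inside any subspace $Y$, with $J$ replaced by $J_Y$ (the supporting functionals computed in $Y$) and the supremum taken over $S_Y$.

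For part (i) I would fix $\varepsilon\in[0,2)$ and argue both directions from this identity. If $X$ is $\varepsilon$-smooth and $Y$ is a two-dimensional subspace, then for $x\in S_Y\subseteq S_X$ the supremum defining $\diam J_Y(x)$ runs over the smaller set $S_Y\subseteq S_X$, so $\diam J_Y(x)\le\diam J(x)\le\varepsilon$; thus $Y$ is $\varepsilon$-smooth. Conversely, suppose every two-dimensional subspace is $\varepsilon$-smooth and fix $x\in S_X$. For each $y\in S_X$, either $y\in\operatorname{span}\{x\}$, in which case property ($\rho$-i) gives $\rho'_+(x,y)-\rho'_-(x,y)=0$, or $Y:=\operatorname{span}\{x,y\}$ is two-dimensional and $\varepsilon$-smooth, whence $\rho'_+(x,y)-\rho'_-(x,y)\le\diam J_Y(x)\le\varepsilon$. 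Taking the supremum over $y\in S_X$ and applying Lemma~\ref{equivalence} yields $\diam J(x)\le\varepsilon$; since $x$ was arbitrary, $X$ is $\varepsilon$-smooth.

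For part (ii), the first assertion is immediate: if $X$ is approximately smooth then it is $\varepsilon$-smooth for some $\varepsilon<2$, and part (i) forces every two-dimensional subspace to be $\varepsilon$-smooth, hence approximately smooth. For the second assertion I assume $\dim X<\infty$ and that every two-dimensional subspace is approximately smooth; by Theorem~\ref{local-global-appr-smooth} it then suffices to show that each individual $x\in S_X$ satisfies $\diam J(x)<2$. Here I would use that in finite dimensions the function $y\mapsto\rho'_+(x,y)-\rho'_-(x,y)$ is continuous on the compact sphere $S_X$, being the difference of the support function $y\mapsto\|x\|\sup_{f\in J(x)}f(y)$ of the compact set $\|x\|J(x)$ and its concave counterpart $y\mapsto\|x\|\inf_{f\in J(x)}f(y)$. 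Consequently the supremum $\diam J(x)=\sup_{y\in S_X}\{\rho'_+(x,y)-\rho'_-(x,y)\}$ is attained at some $y_0\in S_X$. Putting $Y:=\operatorname{span}\{x,y_0\}$ (two-dimensional unless $y_0\in\operatorname{span}\{x\}$, in which case the difference is $0$), approximate smoothness of $Y$ gives $\diam J(x)=\rho'_+(x,y_0)-\rho'_-(x,y_0)\le\diam J_Y(x)<2$, as required.

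I expect the only genuine obstacle to be this final step: passing from the family of bounds ``$\diam J_Y(x)<2$ for every two-dimensional $Y\ni x$'' to the single bound ``$\diam J(x)<2$''. A pointwise supremum of quantities each strictly below $2$ need not remain below $2$, so the argument really needs the compactness of $S_X$ to guarantee that a worst-case direction $y_0$ exists. This is precisely where finite-dimensionality is used, and it also explains why the statement with a non-fixed constant cannot be expected in general. Everything else is bookkeeping with the norm-derivative identity and Lemma~\ref{equivalence}.
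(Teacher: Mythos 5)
Your proposal is correct, but it runs along a different track from the paper's own argument. The paper works directly with supporting functionals: for the easy direction of (i) it extends $\tilde f,\tilde g\in J_Y(y)$ to $f,g\in J(x)$ by Hahn--Banach, and for the converse it picks a bad pair $f,g\in J(x)$ with $\|f-g\|>\varepsilon$, chooses $y\in S_X$ with $|(f-g)(y)|>\varepsilon$, and restricts to $Y=\mathrm{span}\{x,y\}$; in the finite-dimensional part of (ii) it likewise invokes Theorem~\ref{local-global-appr-smooth} to produce $x$ and $f,g\in J(x)$ with $\|f-g\|=2$ and then uses compactness of $S_X$ to find $y$ with $|(f-g)(y)|=2$. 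You instead route everything through the identity $\diam J(x)=\sup_{y\in S_X}\{\rho'_+(x,y)-\rho'_-(x,y)\}$ (which indeed follows by combining the two inequalities in the proof of Lemma~\ref{equivalence}) together with the observation that $\rho'_\pm(x,y)$ is intrinsic to $\mathrm{span}\{x,y\}$. This buys you two small advantages: the easy direction of (i) becomes a monotonicity-of-suprema statement that needs no Hahn--Banach extension (and sidesteps the paper's slightly careless claim that $\|\tilde f-\tilde g\|=\|f-g\|$, where only $\le$ is guaranteed and needed), and in (ii) the worst-case direction $y_0$ is located by maximizing a single continuous function on $S_X$ rather than by first extracting an extremal pair from $J(x)$. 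The paper's version is more elementary in that it needs no continuity argument for the norm-derivative gap, only compactness of $J(x)$ and $S_X$. Both proofs isolate the same genuine obstacle — that a supremum of quantities each strictly below $2$ need not stay below $2$ — and both resolve it with finite-dimensional compactness plus Theorem~\ref{local-global-appr-smooth}, so your final paragraph correctly identifies where the hypothesis $\dim X<\infty$ is indispensable.
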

\begin{proof}
Ad (i). Let $X$ be $\varepsilon$-smooth space and $Y$ its two-dimensional subspace. For $y\in Y\smallsetminus\{\theta\}$ any two supporting functionals at $y$, $\tilde{f},\tilde{g}\in S_{Y^*}$ can be extended (by Hahn-Banach theorem) to $f,g\in S_{X^*}$ --- supporting functionals at $y$ in $X$. Now, $\|\tilde{f}-\tilde{g}\|=\|f-g\|\leq\varepsilon$ implies that $Y$ is $\varepsilon$-smooth.

Conversely, let every two-dimensional subspace of $X$ be $\varepsilon$-smooth. Suppose, on the contrary, that $X$ is not $\varepsilon$-smooth. Then there exist $x\in S_X$ and $f,g\in J(x)$ such that $\|f-g\|>\varepsilon$. Let $y\in S_X$ be such that $|(f-g)(y)|>\varepsilon$. Clearly, this choice of $y$ implies that $x$ and $y$ are linearly independent. 
Let $Y=\mbox{span}\{x,y\}$, $\tilde{f}=f|_Y$ and $\tilde{g}=g|_Y$. Then $\tilde{f}(x)=1$, $\tilde{g}(x)=1$ implies that $\tilde{f},\tilde{g}\in S_{Y^*}$. Also, $|(f-g)(y)|>\varepsilon$ implies that $\|\tilde{f}-\tilde{g}\|>\varepsilon$. Thus $\tilde{f},\tilde{g}\in \{h\in S_{Y^*}: h(x)=1\}$ and $\|\tilde{f}-\tilde{g}\|>\varepsilon$. This leads to the contradiction with $\varepsilon$-smoothness of $Y$ and thus $X$ is $\varepsilon$-smooth.

Ad (ii). If $X$ is approximately smooth then, as above, an application of Hahn-Banach theorem yields approximate smoothness of any two-dimensional subspace of $X$.

Now we assume that $X$ is a finite-dimensional space and let every two-dimensional subspace of $X$ be approximately smooth. Suppose, on the contrary, that $X$ is not approximately smooth. Then by Theorem \ref{local-global-appr-smooth} there exist $x\in S_X$ and $f,g\in J(x)$ such that $\|f-g\|=2$. Let $y\in S_X$ be such that $|(f-g)(y)|=2$. Clearly, this choice of $y$ implies that $x$ and $y$ are linearly independent. 
Let $Y=\mbox{span}\{x,y\}$, $\tilde{f}=f|_Y$ and $\tilde{g}=g|_Y$. Now by using arguments similar to the proof of (i) we arrive at the contradiction with approximate smoothness of $Y$ and thus $X$ is approximately smooth.
\end{proof}

The following example shows that in some spaces the notions of smoothness and approximate smoothness can coincide.

\begin{example}
(a) Observe that if $x=(x_1,x_2,x_3,\ldots)\in \ell_1\smallsetminus\{\theta\}$ then $d(x)=2$ if $x_{i_0}=0$ for some $i_0\in\mathbb{N}$ and $d(x)=0$ if $x_i\not=0$ for all $i$. Using this observation it follows that $x\in\ell_1\smallsetminus\{\theta\}$ is $\varepsilon$-smooth for $\varepsilon\in[0,2)$ if and only if $x$ is smooth.

(b) If $x=(x_1,x_2,x_3,\ldots)\in c_0\smallsetminus\{\theta\}$ then $d(x)=2$ if norm of $x$ is attained at more than one coordinate and $d(x)=0$ if norm of $x$ is attained only at one coordinate. Thus it follows that $x\in c_0\smallsetminus\{\theta\}$ is $\varepsilon$-smooth for $\varepsilon\in[0,2)$ if and only if $x$ is smooth.

\end{example}
\subsection{Approximate rotundity} 

Rotundity (strict convexity) is ano\-ther important geometrical property of normed spaces. Although rotundity and smoothness are independent properties, they are related to each other. We would like to show that it is so  with its approximate counterparts.

Let $X$ be a normed linear space and let $\varepsilon\in [0,2)$.
\begin{definition}
We say that $X$ is $\varepsilon$-rotund (or $\varepsilon$-strictly convex) if 
$\mathcal{S}(X)\leq \varepsilon$.
\end{definition}

Obviously, for $\varepsilon=0$ the above condition means that each functional $f\in S_{X^*}$ supports the unit sphere at at most one point, which is equivalent to rotundity (cf. \cite[Theorem 5.1.15, Corollary 5.1.16]{Megginson}).

Based on (\ref{relation between e and s}), we can easily establish connections between approximate smoothness (rotundity) of given space and approximate rotundity (smoothness) of its dual. 

\begin{theorem}\label{duality}
Let $X$ be a normed linear space and let $\varepsilon\in[0,2)$.
\begin{enumerate}
\item
If $X^*$ is $\varepsilon$-smooth, then $X$ is $\varepsilon$-rotund; 
\item
If $X^*$ is $\varepsilon$-rotund, then $X$ is $\varepsilon$-smooth;
\item
If $X$ is reflexive, then $X$ is $\varepsilon$-smooth if and only if $X^*$ is $\varepsilon$-rotund and $X$ is $\varepsilon$-rotund if and only if $X^*$ is $\varepsilon$-smooth.
\end{enumerate}
\end{theorem}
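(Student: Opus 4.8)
The plan is to reduce all three parts to two elementary observations about the canonical embedding $\Psi$, exploiting that it is an isometry, and to track carefully at which point its surjectivity (reflexivity) is actually needed.

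First I would record the two set-identities linking the ``rotundity sets'' $M_f^+$ and the ``smoothness sets'' $J(\cdot)$ across the levels $X$, $X^*$, $X^{**}$. For a functional $f\in S_{X^*}$, regarding $J(f)$ inside $X^{**}$, every $x\in M_f^+$ satisfies $\Psi(x)(f)=f(x)=1=\|f\|$ with $\|\Psi(x)\|=1$, so $\Psi(M_f^+)\subseteq J(f)$; since $\Psi$ is an isometry this yields $\diam M_f^+\leq\diam J(f)$. Dually, for $x\in S_X$ and $\phi:=\Psi(x)\in S_{X^{**}}$ one has the exact equality $J(x)=\{f\in S_{X^*}: f(x)=1\}=\{f\in S_{X^*}: \phi(f)=1\}=M_\phi^+$, valid with no reflexivity hypothesis.

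From here parts (1) and (2) are immediate. For (1), $\varepsilon$-smoothness of $X^*$ gives $\diam J(f)\leq\varepsilon$ for each $f\in S_{X^*}$, and the first observation forces $\diam M_f^+\leq\varepsilon$, i.e.\ $X$ is $\varepsilon$-rotund. For (2), $\varepsilon$-rotundity of $X^*$ gives $\diam M_\phi^+\leq\varepsilon$ for every $\phi\in S_{X^{**}}$; specializing to $\phi=\Psi(x)$ and invoking the second observation gives $\diam J(x)\leq\varepsilon$ for each $x\in S_X$, i.e.\ $X$ is $\varepsilon$-smooth.

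For part (3) I would use reflexivity to supply the converses. The ``if'' directions are precisely (1) and (2); for the ``only if'' directions I would apply (1) and (2) with $X$ replaced by $X^*$, using that $\Psi$ is now a surjective isometry so that $X^{**}$ is isometrically $X$ and $\varepsilon$-smoothness and $\varepsilon$-rotundity are isometric invariants. Concretely, applying (1) to $X^*$ shows that $X^{**}(\cong X)$ being $\varepsilon$-smooth forces $X^*$ to be $\varepsilon$-rotund, giving ``$X$ $\varepsilon$-smooth $\Rightarrow$ $X^*$ $\varepsilon$-rotund''; applying (2) to $X^*$ shows that $X^{**}(\cong X)$ being $\varepsilon$-rotund forces $X^*$ to be $\varepsilon$-smooth, giving ``$X$ $\varepsilon$-rotund $\Rightarrow$ $X^*$ $\varepsilon$-smooth''. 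The only point requiring genuine care is the bookkeeping across the three dual levels and confirming that the identification $X^{**}=\Psi(X)$ is invoked only in part (3) and nowhere in (1)--(2); beyond this there is no real obstacle, so I expect the argument to be quite short.
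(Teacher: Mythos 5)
Your proposal is correct and follows essentially the same route as the paper: both arguments rest on the canonical embedding $\Psi$ being an isometry, giving $\Psi(M_f^+)\subseteq J(f)$ for part (1) and $J(x)=M_{\Psi(x)}^+$ for part (2), with reflexivity (surjectivity of $\Psi$) invoked only to obtain the converses in part (3) by applying (1) and (2) to $X^*$. The only difference is presentational — you phrase the steps as set identities where the paper picks explicit elements $x_1,x_2$ and $f,g$ — and your explicit bookkeeping for part (3) just spells out what the paper leaves as ``follows from (1), (2) and $\Psi(X)=X^{**}$.''
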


\section{Approximate additivity of the Birkhoff-James orthogonality in approximately smooth spaces}

As we have reminded, the Birkhoff-James orthogonality is locally right-additive at smooth points. Although it is no longer true for non-smooth points, we will show that under a relaxed condition of approximate smoothness, right-additivity can be partially maintained. 

We will start with an example showing that, in general, approximate smoothness does not imply approximate right-additivity of the Birkhoff-James orthogonality.

\begin{example}\label{ex3.1}
Let $X=\mathbb{R}^2$ with a norm generated by
$$
B_{\delta}:=\conv\{(1,1),(0,1+\delta),(-1,1),(-1,-1),(0,-1-\delta),(1,-1)\}
$$
as a unit ball (with $\delta> 0$).

Consider the point $P=(0,1+\delta)$ and functionals 
$$
\begin{array}{c}
f(x,y)=\frac{\delta}{1+\delta}x+\frac{1}{1+\delta} y,\\
g(x,y)=-\frac{\delta}{1+\delta}x+\frac{1}{1+\delta} y.
\end{array}
$$
Notice that $f,g\in J(P)$ (we denote by $L_f$ and $L_g$ the respective supporting lines) and $J(P)=\conv\{f,g\}$ (see Figure 1).
Then
$$
\diam J(P)=\|f-g\|=\sup_{(x,y)\in B_{\delta}}\frac{2\delta}{1+\delta}|x|= \frac{2\delta}{1+\delta}.
$$
Thus the considered space is $\varepsilon$-smooth at $P$, with arbitrarily small $\varepsilon$ (if only $\delta$ is small enough). On the other hand, for $R_1=(1,\delta)$ and $R_2=(-1,\delta)$ we have
$$
P\bot_B R_1,\quad P\bot_B R_2
$$
but $R_1+R_2=(0,2\delta)=\lambda P$ with $\lambda=\frac{2\delta}{1+\delta}$ hence $P\not{\hspace{-0.3em}\bot}_B^{\varepsilon}(R_1+R_2)$ for any $\varepsilon\in [0,1)$.

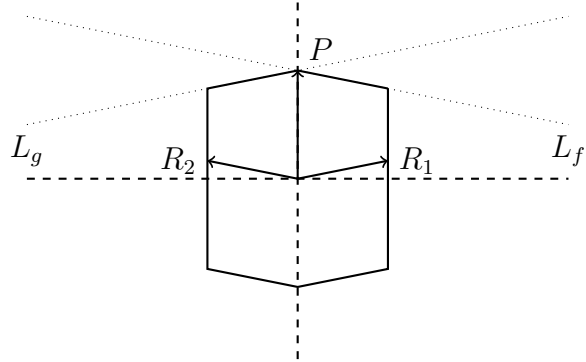
\begin{figure} \label{fig2}
\centering
\begin{tikzpicture}[scale=1.2]

\draw[thick] (1,1)--(0,1.2)--(-1,1)--(-1,-1)--(0,-1.2)--(1,-1)--(1,1);

\draw[thick, black,dashed] (-3,0)--(3,0) node[above]{};
\draw[thick,black,dashed] (0,-2)--(0,2) node[above]{};

\draw (0,1.2)  node[above right] {$P$};
\draw[thick,->] (0,0)--(0,1.2);

\draw (1,0.2)  node[right] {$R_1$};
\draw[thick,->] (0,0)--(1,0.2);

\draw (-1,0.2)  node[left] {$R_2$};
\draw[thick,->] (0,0)--(-1,0.2);

\draw[dotted] (-3,0.6)--(3,1.8);
\draw (-3,0.6)  node[below] {$L_g$};

\draw[dotted] (-3,1.8)--(3,0.6);
\draw (3,0.6)  node[below] {$L_f$};

\end{tikzpicture}
\caption{Illustration to Example \ref{ex3.1}}
\end{figure}
\end{example}

Although approximate smoothness generally does not imply even approximate right-additivity (no matter how small is $\varepsilon$), the following results give some information in particular situations.

\begin{theorem}\label{right-additive}
Let $X$ be a normed linear space and let $x\in X\smallsetminus\{\theta\}$. Let $y_1, y_2\in X\smallsetminus\{\theta\}$ be such that $x\perp_B^\varepsilon y_1$, $x\perp_B^\varepsilon y_2$, where $\varepsilon\in[0,1)$ is such that  
$$
0\leq\varepsilon<\frac{2\|y_1+y_2\|}{3(\|y_1\|+\|y_2\|)}<1.
$$
If $x$ is $\varepsilon$-smooth, then there exists $\varepsilon_1\in[0,1)$ such that $x\perp_B^{\varepsilon_1} (y_1+y_2)$.
\end{theorem}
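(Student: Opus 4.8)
The plan is to translate both hypotheses through the functional characterization \eqref{Bchar} and then exploit the convexity of $J(x)$. From $x\perp_B^\varepsilon y_1$ and $x\perp_B^\varepsilon y_2$ I obtain functionals $f,g\in J(x)$ with $|f(y_1)|\leq\varepsilon\|y_1\|$ and $|g(y_2)|\leq\varepsilon\|y_2\|$. The $\varepsilon$-smoothness of $x$ means exactly that $\diam J(x)\leq\varepsilon$, so in particular $\|f-g\|\leq\varepsilon$. The form of the hypothesis, with the factor $\tfrac{3}{2}(\|y_1\|+\|y_2\|)$ hidden in the denominator, suggests testing $y_1+y_2$ not against $f$ or $g$ alone but against their average $h:=\tfrac12(f+g)$, which again lies in $J(x)$ precisely because $J(x)$ is convex.

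The core of the argument is to bound $|h(y_1)|$ and $|h(y_2)|$ separately. Writing $g=f+(g-f)$ and using $\|g-f\|\leq\varepsilon$, I get $|g(y_1)|\leq|f(y_1)|+\|g-f\|\,\|y_1\|\leq 2\varepsilon\|y_1\|$, and hence $|h(y_1)|\leq\tfrac12\bigl(|f(y_1)|+|g(y_1)|\bigr)\leq\tfrac{3\varepsilon}{2}\|y_1\|$. By the symmetric computation, now taking $g$ as the primary functional for $y_2$ and bounding $|f(y_2)|\leq 2\varepsilon\|y_2\|$, I obtain $|h(y_2)|\leq\tfrac{3\varepsilon}{2}\|y_2\|$. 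Adding these gives
\[
|h(y_1+y_2)|\leq|h(y_1)|+|h(y_2)|\leq\tfrac{3\varepsilon}{2}\bigl(\|y_1\|+\|y_2\|\bigr).
\]

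Finally I invoke the quantitative hypothesis. Positivity of $\tfrac{2\|y_1+y_2\|}{3(\|y_1\|+\|y_2\|)}$ forces $y_1+y_2\neq\theta$, so division by $\|y_1+y_2\|$ is legitimate, and the strict inequality $\varepsilon<\tfrac{2\|y_1+y_2\|}{3(\|y_1\|+\|y_2\|)}$ rearranges exactly into $\tfrac{3\varepsilon}{2}(\|y_1\|+\|y_2\|)<\|y_1+y_2\|$. Combined with the displayed estimate this yields $|h(y_1+y_2)|<\|y_1+y_2\|$, so setting $\varepsilon_1:=\tfrac{|h(y_1+y_2)|}{\|y_1+y_2\|}\in[0,1)$ and applying \eqref{Bchar} with the witness $h\in J(x)$ gives $x\perp_B^{\varepsilon_1}(y_1+y_2)$. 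The only real idea, and the step where a reader is most likely to go astray, is the choice of the averaged functional $h$ rather than $f$ or $g$: testing against $f$ alone produces the asymmetric bound $\varepsilon(\|y_1\|+2\|y_2\|)$, which does not match the symmetric hypothesis, whereas averaging distributes the $\diam J(x)$ error evenly and reproduces the constant $\tfrac32$ that appears in the statement.
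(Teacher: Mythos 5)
Your proposal is correct and follows essentially the same route as the paper's proof: the same functionals $f,g\in J(x)$ from \eqref{Bchar}, the same use of $\|f-g\|\leq\varepsilon$ to get the bounds $|f(y_2)|\leq 2\varepsilon\|y_2\|$ and $|g(y_1)|\leq 2\varepsilon\|y_1\|$, the same averaged functional $\tfrac12(f+g)\in J(x)$, and the same final estimate $\tfrac{3\varepsilon}{2}(\|y_1\|+\|y_2\|)<\|y_1+y_2\|$. The only difference is that you group the terms by $y_1$ and $y_2$ rather than summing all four at once, which is purely cosmetic.
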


\begin{proof}
According to \eqref{Bchar} we can find $f,g\in J(x)$ such that $|f(y_1)|\leq \varepsilon\|y_1\|$ and $|g(y_2)|\leq \varepsilon\|y_2\|$. Now, using $\varepsilon$-smoothness of $x$, we get
\begin{align*}
    |f(y_2)|&=|(f-g)(y_2)+g(y_2)|\\
            &\leq \|f-g\|\|y_2\|+\|g(y_2)\|\\
            &\leq \varepsilon\|y_2\|+\varepsilon \|y_2\|\\
            &=2\varepsilon\|y_2\|.
\end{align*}
Using similar arguments we can show that $|g(y_1)|\leq 2 \varepsilon\|y_1\|$. Convexity of $J(x)$ yields  $\frac{1}{2}f+\frac{1}{2}g \in J(x)$ and 
\begin{align*}
    \left|\left(\frac{1}{2}f+\frac{1}{2}g\right)(y_1+y_2)\right|&\leq \frac{1}{2}(\varepsilon \|y_1\|+2\varepsilon\|y_2\|+2\varepsilon\|y_1\|+\varepsilon\|y_2\|)\\
    &=\frac{3}{2}\varepsilon(\|y_1\|+\|y_2\|)\\
    &<\|y_1+y_2\|.
\end{align*}
Let ${\varepsilon_1}\in[0,1)$ be such that $|(\frac{1}{2}f+\frac{1}{2}g)(y_1+y_2)|\leq \varepsilon_1 \|y_1+y_2\| $.  Then \eqref{Bchar} implies that $x\perp_B^{\varepsilon_1} (y_1+y_2)$.
\end{proof}

\begin{theorem}\label{Cor1}
Let $X$ be a normed linear space and let $x\in X\smallsetminus\{\theta\}$. Let $y_1, y_2\in X\smallsetminus\{\theta\}$ be such that $x\perp_By_1$, $x\perp_By_2$. If $x$ is $\varepsilon$-smooth, where $0\leq\varepsilon<\frac{2\|y_1+y_2\|}{\|y_1\|+\|y_2\|}\leq 2$, then there exists ${\varepsilon_1}\in[0,1)$ such that $x\perp_B^{\varepsilon_1} (y_1+y_2)$.
\end{theorem}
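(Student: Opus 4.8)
The plan is to mimic the proof of Theorem~\ref{right-additive}, but to exploit the \emph{exact} orthogonality hypotheses $x\perp_B y_1$ and $x\perp_B y_2$, which are strictly stronger than the approximate ones and therefore yield a sharper estimate; this is precisely what allows the factor $3$ appearing in the denominator of Theorem~\ref{right-additive} to be dropped.

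First I would invoke James's characterization \cite[Theorem 2.1]{J1}: since $x\perp_B y_1$ and $x\perp_B y_2$, there exist $f,g\in J(x)$ with $f(y_1)=0$ and $g(y_2)=0$. Next, the $\varepsilon$-smoothness of $x$ gives $\|f-g\|\leq\varepsilon$. I would then transfer the vanishing of $f$ on $y_1$ into a bound on $f$ evaluated at $y_2$ (and symmetrically for $g$) via
\begin{align*}
|f(y_2)|&=|(f-g)(y_2)+g(y_2)|\leq\|f-g\|\,\|y_2\|\leq\varepsilon\|y_2\|,\\
|g(y_1)|&=|(g-f)(y_1)+f(y_1)|\leq\|f-g\|\,\|y_1\|\leq\varepsilon\|y_1\|,
\end{align*}
where the terms $g(y_2)$ and $f(y_1)$ drop out because they vanish.

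The decisive step is to test $y_1+y_2$ against the midpoint functional $h:=\tfrac12 f+\tfrac12 g$, which lies in $J(x)$ by convexity of $J(x)$. Using $f(y_1)=g(y_2)=0$ together with the two estimates above,
\begin{align*}
|h(y_1+y_2)|=\left|\tfrac12 f(y_2)+\tfrac12 g(y_1)\right|\leq\tfrac12\varepsilon\|y_2\|+\tfrac12\varepsilon\|y_1\|=\tfrac{\varepsilon}{2}\bigl(\|y_1\|+\|y_2\|\bigr).
\end{align*}
The hypothesis $\varepsilon<\frac{2\|y_1+y_2\|}{\|y_1\|+\|y_2\|}$ is exactly what makes the right-hand side strictly smaller than $\|y_1+y_2\|$. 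Hence, choosing $\varepsilon_1\in[0,1)$ with $|h(y_1+y_2)|\leq\varepsilon_1\|y_1+y_2\|$ and applying \eqref{Bchar} to the functional $h\in J(x)$ yields $x\perp_B^{\varepsilon_1}(y_1+y_2)$.

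I do not expect a genuine obstacle here, as the argument is a streamlined specialization of Theorem~\ref{right-additive}. The only subtlety worth noting is that the admissible range for $\varepsilon$ is nonempty only when $y_1+y_2\neq\theta$, which is automatically guaranteed by the strict inequality $\varepsilon<\frac{2\|y_1+y_2\|}{\|y_1\|+\|y_2\|}$; the degenerate case $y_1+y_2=\theta$ is vacuous, since then no $\varepsilon$ satisfies the hypothesis (and in any event $x\perp_B^{\varepsilon_1}\theta$ holds trivially).
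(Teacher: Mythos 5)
Your proposal is correct and follows essentially the same route as the paper's own proof: James's characterization to produce $f,g\in J(x)$ with $f(y_1)=g(y_2)=0$, the $\varepsilon$-smoothness bound $\|f-g\|\leq\varepsilon$ to estimate $|f(y_2)|$ and $|g(y_1)|$, and the midpoint functional $\tfrac12 f+\tfrac12 g\in J(x)$ tested against $y_1+y_2$ via \eqref{Bchar}. No gaps; the remark about the degenerate case $y_1+y_2=\theta$ is a harmless addition.
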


\begin{proof}
We find $f,g\in J(x)$ such that $f
(y_1)=0$ and $g(y_2)=0$. Now, by $\varepsilon$-smoothness of $x$, we get
$$
|f(y_2)|=|(f-g)(y_2)|\leq \varepsilon\|y_2\|,
$$
and similarly $|g(y_1)|\leq\varepsilon\|y_1\|$. Clearly, $\frac{1}{2}f+\frac{1}{2}g \in J(x)$ and 
$$
\left |\left(\frac{1}{2}f+\frac{1}{2}g\right)(y_1+y_2)\right|\leq\frac{\varepsilon}{2}(\|y_1\|+\|y_2\|)<\|y_1+y_2\|.
$$
Let ${\varepsilon_1}\in[0,1)$ be such that $\left|\left(\frac{1}{2}f+\frac{1}{2}g\right)(y_1+y_2)\right|\leq \varepsilon_1 \|y_1+y_2\| $.  Thus, \eqref{Bchar} implies that $x\perp_B^{\varepsilon_1} (y_1+y_2)$.
\end{proof}

We now provide an example to show that in  Theorems~\ref{right-additive} and \ref{Cor1}, approximate smoothness of $x$ cannot be omitted. We will need the following lemma.

\begin{lemma}\label{lemma1}
Let $X$ be a normed linear space and let $x\in S_X$. Then the approximate Birkhoff-James orthogonality is not locally right-additive at $x$ if and only if there exist $y_1,y_2\in S_X$, $f_1,f_2\in J(x)$ such that $y_1\not\in M_{f_1}$, $y_2\not\in M_{f_2}$ and $\frac{y_1+y_2}{\|y_1+y_2\|}\in M_f$ for all $f\in J(x)$. 
\end{lemma}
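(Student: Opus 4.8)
The plan is to prove both implications through the functional characterizations \eqref{Bchar2} and \eqref{Bchar3} together with the scale-invariance of $\perp_B^\varepsilon$ (which follows at once from \eqref{Bchar}, since $|f(ty)|\le\varepsilon\|ty\|$ is equivalent to $|f(y)|\le\varepsilon\|y\|$ for $t>0$). For the ``if'' direction I would simply take the witnesses themselves: given $y_1,y_2\in S_X$ and $f_1,f_2\in J(x)$ as in the statement, set $\varepsilon:=\max\{|f_1(y_1)|,|f_2(y_2)|\}<1$. Since $y_i\notin M_{f_i}$ means $|f_i(y_i)|<1$, \eqref{Bchar} gives $x\perp_B^\varepsilon y_1$ and $x\perp_B^\varepsilon y_2$, while $\frac{y_1+y_2}{\|y_1+y_2\|}\in M_f$ for all $f\in J(x)$ gives, by \eqref{Bchar3} applied to the unit vector $\frac{y_1+y_2}{\|y_1+y_2\|}$ and by scale-invariance, that $x\not\perp_B^{\varepsilon_1}(y_1+y_2)$ for every $\varepsilon_1\in[0,1)$. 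Thus $y_1,y_2,\varepsilon$ witness the failure of right-additivity.

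For the ``only if'' direction, suppose $\perp_B^\varepsilon$ is not locally right-additive at $x$, with witnesses $y,z\in X$ and $\varepsilon\in[0,1)$. First I would record the elementary facts that $y,z,y+z\neq\theta$ (otherwise right-additivity holds trivially), and set $w:=\frac{y+z}{\|y+z\|}\in S_X$. Scale-invariance and \eqref{Bchar3} turn the non-orthogonality $x\not\perp_B^{\varepsilon_1}(y+z)$ (all $\varepsilon_1$) into $w\in M_f$ for all $f\in J(x)$; a short convexity argument (if $f(w)=1$ and $g(w)=-1$ then $\tfrac12(f+g)\in J(x)$ annihilates $w$) lets me assume $f(w)=1$ for all $f\in J(x)$. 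Dually, \eqref{Bchar2} gives $\frac{y}{\|y\|}\notin M_{f_1}$ for some $f_1$, so $\frac{y}{\|y\|}$ lies off $\bigcap_{f}M_f$. The naive attempt $y_1:=\frac{y}{\|y\|}$, $y_2:=\frac{z}{\|z\|}$ satisfies the first two required conditions immediately, but --- and this is the crux --- in general $\frac{y}{\|y\|}+\frac{z}{\|z\|}$ is \emph{not} a positive multiple of $w$ (they agree only when $\|y\|=\|z\|$), so the third condition may fail. Hence the real content is the purely geometric claim: there exist $p,q\in S_X$, each lying off $\bigcap_f M_f$, whose normalized sum lies in $\bigcap_f M_f$.

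To prove this claim I would pass to any two-dimensional subspace $V$ containing $w$ (note $\dim X\ge2$, as $x$ is non-smooth) and study the convex curve $S_X\cap V$, on which $w$ is a boundary point with $f(w)=1$ for all $f$. Writing $K_V:=\{v\in S_X\cap V:\ |f(v)|=1\ \forall f\in J(x)\}$, its positive part $\{v:\ f(v)=1\ \forall f\}=\bigcap_f(M_f^+\cap V)$ is an intersection of faces of the planar body $B_X\cap V$, hence either the single point $\{w\}$ or a genuine edge $E\ni w$; in the edge case all $f|_V$ coincide with the functional $\phi$ exposing $E$, so $K_V=E\cup(-E)$. In the first case $w$ is an extreme point of $B_X\cap V$, and I would take the chord of $B_X\cap V$ centred at $tw$ (which exists by a rotating-chord intermediate value argument), with endpoints $p(t),q(t)$ satisfying $p(t)+q(t)=2tw$; as $t\to1^-$ these endpoints shrink to $w$ (extremality forbids a non-degenerate limiting chord), so for $t$ close to $1$ they avoid $K_V$ while their normalized sum is exactly $w\in\bigcap_f M_f$. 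In the edge case I would instead take $p,q$ to be the two endpoints of the level chord $\{v:\ \phi(v)=1-s\}\cap(S_X\cap V)$ for small $s>0$; these lie off $E\cup(-E)$, and a direct computation shows their normalized sum tends to the midpoint of $E$, hence lies in the relative interior of $E\subseteq\bigcap_f M_f$ for $s$ small. Either way, setting $y_1:=p$, $y_2:=q$ and choosing $f_1,f_2\in J(x)$ with $|f_i(y_i)|<1$ completes the proof. The hard part will be exactly this planar construction: it is not enough to find endpoints whose sum points into a single $M_f$, one must land the normalized sum inside the \emph{intersection} $\bigcap_f M_f$, which is what forces the split between the extreme-point and edge cases and the two different chord families.
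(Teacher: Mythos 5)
Your proof is correct, and the ``if'' direction is exactly the paper's argument (pick $\varepsilon=\max\{|f_1(y_1)|,|f_2(y_2)|\}<1$ and invoke \eqref{Bchar} and \eqref{Bchar3}). Where you genuinely diverge is the ``only if'' direction. The paper reads ``not locally right-additive at $x$'' as directly asserting the existence of \emph{unit-vector} witnesses $y_1,y_2\in S_X$ with no $\varepsilon_1\in[0,1)$ for which $x\perp_B^{\varepsilon_1}\frac{y_1+y_2}{\|y_1+y_2\|}$ (this is also how the lemma is used in Example \ref{ex1}), so its entire proof is a one-line translation through \eqref{Bchar2} and \eqref{Bchar3}. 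You instead negate the definition literally, with arbitrary witnesses $y,z\in X$, and then do real work: since $\frac{y}{\|y\|}+\frac{z}{\|z\|}$ need not be parallel to $y+z$ when $\|y\|\neq\|z\|$, you must manufacture fresh unit vectors $p,q$ off $\bigcap_f M_f$ whose normalized sum lands in $\bigcap_f M_f$. Your construction does check out: the convexity step reducing to $f(w)=1$ for all $f\in J(x)$ is valid; in the edge case all $f|_V$ coincide because a nondegenerate segment of $S_X\cap V$ determines a linear functional on the two-dimensional $V$, so $K_V=E\cup(-E)$ and the level-chord endpoints work; in the point case $w$ is indeed extreme in $B_X\cap V$. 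One simplification: the limiting argument as $t\to 1^-$ in the extreme-point case is unnecessary --- for \emph{any} $t\in(0,1)$ a chord with midpoint $tw$ (which exists by your intermediate-value argument) automatically has both endpoints off $K_V=\{\pm w\}$, since an endpoint equal to $\pm w$ would force the other endpoint to be $(2t\mp 1)w$, which has norm $\neq 1$. In summary, you prove a formally stronger statement than the paper does (failure with arbitrary witnesses already yields the unit-vector configuration), at the cost of a planar convexity argument that the paper avoids entirely by its choice of formulation.
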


\begin{proof}
The proof relies on characterizations \eqref{Bchar2} and \eqref{Bchar3}. 
Let $x\in S_X$ and suppose that the approximate Birkhoff-James orthogonality is not locally right-additive at $x$, i.e., there exist $y_1,y_2\in S_X$, $\varepsilon\in[0,1)$ such that $x\perp_B^\varepsilon y_1$, $x\perp_B^\varepsilon y_2$ and there does not exist any $\varepsilon_1\in[0,1)$ such that  $x\perp_B^{\varepsilon_1} \frac{y_1+y_2}{\|y_1+y_2\|}$. Equivalently, we can write that (due to \eqref{Bchar2}) there exist $f_1,f_2\in J(x)$ such that $y_1\not\in M_{f_1}$, $y_2\not\in M_{f_2}$ as well as (by \eqref{Bchar3}) that $\frac{y_1+y_2}{\|y_1+y_2\|}\in M_f$ for all $f\in J(x)$. 
\end{proof}

If $x\in X\smallsetminus\{\theta\}$ is $\varepsilon$-smooth and $y_1,y_2\in X$ satisfy the conditions stated in Theorem~\ref{right-additive}, then there exists $f\in J(x)$ such that $\frac{y_1+y_2}{\|y_1+y_2\|}\not\in M_f$. The following example shows that if in Theorems~\ref{right-additive} or \ref{Cor1} the assumption of $x$ being $\varepsilon$-smooth is omitted, the results are not true. 

\begin{example}\label{ex1}
Consider $X=(\mathbb{R}^2,\|~\|_\infty)$. Let $x,y_1,y_2\in S_X$, where $x=(1,1)$, $y_1=(1,\alpha)$, $y_2=(\alpha,1)$, $0<\alpha<\frac{1}{2}$ (see Figure 2(A)). Then, we can observe, using \eqref{Approximate_Characterization}, that
$x\perp_B^\alpha y_1$ and $x\perp_B^\alpha y_2$. Moreover, 
$0<\alpha<\frac{1+\alpha}{3}=\frac{2\|y_1+y_2\|}{3(\|y_1\|+\|y_2\|)}$.

Now, we have $\frac{y_1+y_2}{\|y_1+y_2\|}=x\in M_f$ for all $f\in J(x)$ and, by Lemma~\ref{lemma1}, approximate Birkhoff-James orthogonality is not right-additive at $x$. 

To justify essentialness of approximate smoothness in Theorem \ref{Cor1} take $x,z_1,z_2\in S_X$, where $x=(1,1)$, $z_1=
(1,-\alpha)$, $z_2=(-\alpha,1)$, $0<\alpha<\frac{1}{2}$ (see Figure 2(B)). Then
$x\perp_Bz_1$, $x\perp_Bz_2$ and clearly, $\frac{z_1+z_2}{\|z_1+z_2\|}=x\in M_f$ for all $f\in J(x)$. Thus Lemma~\ref{lemma1} implies that Birkhoff-James orthogonality is not approximately right-additive at $x$. 
\end{example}

\begin{figure} \label{fig1}
\centering
\begin{subfigure}{.50\textwidth}
\begin{tikzpicture}[scale=1]
\draw (1.2,1.2)  node {$x$};

\draw (0.3,1.3)  node {$y_2$};

\draw (1.3,0.2)  node {$y_1$};

\draw[thick] (-1,1)--(1,1) node[right]{};

\draw[thick] (-1,-1)--(1,-1) node[right]{};

\draw[thick] (1,1)--(1,-1) node[right]{};

\draw[thick] (-1,1)--(-1,-1) node[right]{};

\draw[thick, black,dashed] (-2,0)--(2,0) node[above]{};

\draw[thick,black,dashed] (0,-2)--(0,2) node[above]{};

\draw[thick,->] (0,0)--(1,1);
\draw[thick,->] (0,0)--(1,0.2);
\draw[thick,->] (0,0)--(0.2,1);
\end{tikzpicture}
\caption{}
\end{subfigure}
\begin{subfigure}{.50\textwidth}
\begin{tikzpicture}[scale=1]
\draw[thick, black,dashed] (-7,0)--(-3,0) node[above]{};

\draw[thick,black,dashed] (-5,-2)--(-5,2) node[above]{};
\draw (-3.8,1.2)  node {$x$};

\draw (-3.7,-.2)  node {$z_1$};

\draw[thick] (-6,1)--(-4,1) node[right]{};

\draw[thick] (-6,-1)--(-4,-1) node[right]{};

\draw[thick] (-4,1)--(-4,-1) node[right]{};

\draw[thick] (-6,1)--(-6,-1) node[right]{};

\draw (-5.3,1.2)  node {$z_2$};

\draw[thick,->] (-5,0)--(-4,1);
\draw[thick,->] (-5,0)--(-5.2,1);
\draw[thick,->] (-5,0)--(-4,-0.2);
\end{tikzpicture}
\caption{}
\end{subfigure}

\caption{Illustration to Example \ref{ex1}}
\end{figure}
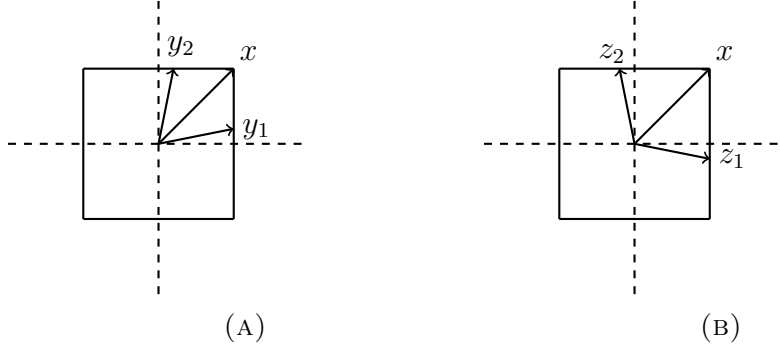

Finally in this section, we give a result showing that approximate smoothness of a vector $x$ guarantees the approximate additivity of the Birkhoff-James orthogonality on some restricted set of vectors.

\begin{theorem}
Let $X$ be a real normed linear space and let $x\in X\smallsetminus\{\theta\}$ be $\varepsilon$-smooth with $\varepsilon\in [0,2)$. Suppose that $y_1,y_2\in X$ are such that $\min\{\|y_1\|,\|y_2\|\}\leq\left\|\frac{y_1+y_2}{2}\right\|$. 
If $x\bot_B\, y_1$ and $x\bot_B\, y_2$, then 
$$
x\bot_B^{\varepsilon / 2}\,(y_1+y_2).
$$
\end{theorem}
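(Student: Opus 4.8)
The plan is to reduce everything to the functional characterization \eqref{Bchar} of approximate Birkhoff--James orthogonality, exactly as in the proofs of Theorems~\ref{right-additive} and \ref{Cor1}. From $x\perp_B y_1$ and $x\perp_B y_2$, James's theorem supplies $f,g\in J(x)$ with $f(y_1)=0$ and $g(y_2)=0$, and the $\varepsilon$-smoothness of $x$ gives $\|f-g\|\le\diam J(x)\le\varepsilon$. The goal is then to produce a single $h\in J(x)$ with $|h(y_1+y_2)|\le\frac{\varepsilon}{2}\|y_1+y_2\|$; since $\frac{\varepsilon}{2}\in[0,1)$, characterization \eqref{Bchar} will immediately yield $x\perp_B^{\varepsilon/2}(y_1+y_2)$.

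The decisive point --- and the difference from the earlier theorems --- is that I would \emph{not} take the symmetric average $\frac12 f+\frac12 g$, since that only bounds the action by $\frac{\varepsilon}{2}(\|y_1\|+\|y_2\|)$ and hence involves the wrong quantity. Instead I would pick the single functional attached to the \emph{larger} of the two vectors. Assuming without loss of generality that $\|y_1\|\le\|y_2\|$, so that $\min\{\|y_1\|,\|y_2\|\}=\|y_1\|$, I would take $h=g$. The advantage is that one of the two cross terms vanishes outright: since $g(y_2)=0$ we have $g(y_1+y_2)=g(y_1)$, and since $f(y_1)=0$ we may rewrite $g(y_1)=(g-f)(y_1)$.

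The estimate then reads $|g(y_1+y_2)|=|(g-f)(y_1)|\le\|g-f\|\,\|y_1\|\le\varepsilon\|y_1\|$, and here the hypothesis enters: $\|y_1\|=\min\{\|y_1\|,\|y_2\|\}\le\big\|\tfrac{y_1+y_2}{2}\big\|=\tfrac12\|y_1+y_2\|$, so that $|g(y_1+y_2)|\le\frac{\varepsilon}{2}\|y_1+y_2\|$. Applying \eqref{Bchar} with $h=g$ finishes the argument. The only genuine obstacle is conceptual rather than computational, namely recognizing that the symmetric averaging used in the previous two theorems must be replaced by this asymmetric choice, so that the telescoping annihilates precisely the term one cannot otherwise control. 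The degenerate case $y_1+y_2=\theta$, which via the hypothesis forces $\min\{\|y_1\|,\|y_2\|\}=0$, is trivial since then $x\perp_B^{\varepsilon/2}\theta$ holds automatically, and can be dispatched in a single line.
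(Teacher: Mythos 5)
Your proposal is correct and is essentially identical to the paper's own proof: both select the functional annihilating the larger vector (here $g$ when $\|y_1\|\le\|y_2\|$), use $g(y_1+y_2)=(g-f)(y_1)$ together with $\diam J(x)\le\varepsilon$, and then invoke the hypothesis $\min\{\|y_1\|,\|y_2\|\}\le\bigl\|\tfrac{y_1+y_2}{2}\bigr\|$ to conclude via the functional characterization of approximate orthogonality. No gaps.
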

\begin{proof}
Since $x\bot_B\, y_1$ and $x\bot_B\, y_2$, there exist $f,g\in J(x)$ such that $f(y_1)=g(y_2)=0$. Therefore, 
$$
f(y_2)=(f-g)(y_2)\ \ \mbox{and}\ \ g(y_1)=(g-f)(y_1)
$$
which, together with the assumed $\diam J(x)\leq\varepsilon$,	gives
$$
|f(y_2)|\leq\varepsilon\|y_2\|\ \ \mbox{and}\ \ |g(y_1)|\leq\varepsilon\|y_1\|.
$$
Suppose that $\|y_1\|\leq\|y_2\|$. It follows then from the assumption that $\|y_1\|\leq \left\|\frac{y_1+y_2}{2}\right\|$ and
$$
|g(y_1+y_2)|=|g(y_1)|\leq\varepsilon\|y_1\|\leq \frac{\varepsilon}{2}\|y_1+y_2\|.
$$   
This means that $x\bot_B^{\varepsilon / 2}\,(y_1+y_2)$.

Similarly, if $\|y_2\|\leq\|y_1\|$ we show that 
$|f(y_1+y_2)|\leq \frac{\varepsilon}{2}\|y_1+y_2\|$,
which also gives the assertion. 
\end{proof}

Note that the condition $\min\{\|y_1\|,\|y_2\|\}\leq\left\|\frac{y_1+y_2}{2}\right\|$ depends both on directions and norms of vectors $y_1,y_2$. It holds true however, regardless of directions, if $\|y_1\|\geq 3\|y_2\|$ or $\|y_2\|\geq 3\|y_1\|$.

\section{Polyhedral spaces}

Now, we consider a $2$-dimensional regular polyhedral Banach space $X$ with $2n$ extreme points. Regularity here means that all the edges of the unit sphere are of the same length with respect to the Euclidean metric and all the interior angles are of the same measure.
For such spaces we will calculate the values of $d(x)$ for each $x\in \Ext B_X$ and the value of $\mathcal{E}(X)$. 

\begin{example}\label{regular polygon}
Let $X$ be a $2$-dimensional regular polyhedral Banach space with $2n$ ($n\geq 2$) extreme points and let $x\in \Ext B_X$. Then,
$$
{d(x)}=\mathcal{E}(X)= \left\{
    \begin{array}{ll}
        2 \tan \frac{\pi}{2n} & \mbox{if~} n\mbox{ ~is~ even,}  \\
       2 \tan \frac{\pi}{2n} \sin\frac{(n-1)\pi}{2n} & \mbox{if}~n~\mbox{ is ~odd. } 
    \end{array}
\right.
$$ 
\end{example}

\begin{proof}
If $x\in S_X\smallsetminus \Ext B_X$ then clearly $d(x)=0$. Thus, to calculate $\mathcal{E}(X)$ it is sufficient to find $d(x)$ for all $x\in \Ext B_X$. Moreover,  regularity and symmetry of $S_X$ implies that $d(x)=d(y)$ for all $x,y\in \Ext B_X$.

Without loss of generality we may assume that 
$$
\Ext B_X=\{x_k:\ 1\leq k \leq 2n\},
$$
where $x_k=\left(\cos\frac{(k-1)\pi}{n},\sin\frac{(k-1)\pi}{n}\right)\ \mbox{for}\ 1\leq k\leq 2n
$
(see Figure 3). 

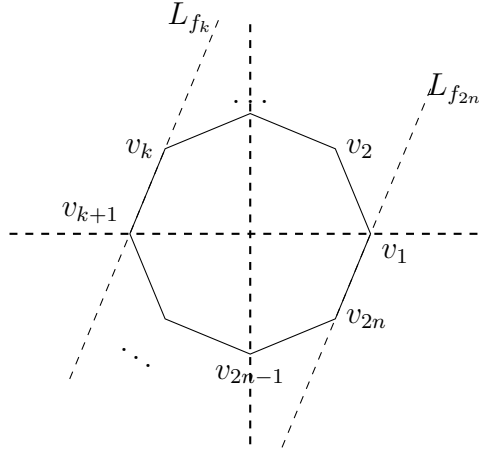
\begin{figure} \label{fig3}
\centering
\begin{tikzpicture}[scale=0.8]
\newdimen\R
\R=2cm
\draw (0:\R)
   \foreach \x in {45,90,135,180,225,270,315,360} {  -- (\x:\R) }
-- cycle (315:\R) node[right] {$v_{2n}$}
-- cycle (270:\R) node[below] {$v_{2n-1}$}
-- cycle (225:\R) node[below left] {$\ddots$} 
-- cycle (180:\R) node[above left] {$v_{k+1}$}
-- cycle (135:\R) node[left] {$v_k$}
-- cycle (90:\R)  node[above] {$\ldots$}  
-- cycle (45:\R)  node[right] {$v_2$} 
-- cycle (0:\R)   node[below right] {$v_1$};

\draw[dashed] (3,2.4141) -- (0.5,-3.62115);

\draw[dashed] (-3,-2.4141) -- (-0.5,3.62115);

\draw(3.4, 2.4141) node{$L_{f_{2n}}$};

\draw(-1, 3.6) node{$L_{f_{k}}$};

\draw[thick, black,dashed] (-4,0)--(4,0) node[above]{};

\draw[thick,black,dashed] (0,-3.5)--(0,3.5) node[above]{};

\end{tikzpicture}
\caption{Illustration to Example \ref{regular polygon}}
\end{figure}

Let $f_k$ be the unique support functional for the segment $\overline{x_kx_{k+1}}$ for $k=1,\ldots,2n-1$ and $\overline{x_{2n}x_{1}}$ for $k=2n$ (on Figure 3 the respective supporting lines are denoted by $L_{f_k}$ and $L_{f_{2n}}$). Some calculations, which will be omitted here, lead to the explicit formula for the value of $f_k$ at $(x,y)\in \mathbb{R}^2$:
$$
f_k((x,y))=\left(x \cos\frac{(2k-1)\pi}{2n} +y \sin\frac{(2k-1)\pi}{2n}\right)\sec \frac{\pi}{2n}
$$ 
(compare with a similar result in \cite[the proof of Theorem 3.1]{SRBB}).
Moreover, 
$$
J(x_k)= \left\{
    \begin{array}{ll}
        \overline{f_{k-1}f_k} & \mbox{if~} 1< k\leq 2n,  \\
       \overline{f_{2n}f_{1}} & \mbox{if } k=1
    \end{array}
\right.
$$
and thus 
$$
{d(x_k)}= \left\{
    \begin{array}{ll}
        \|f_{k-1}-f_k\| & \mbox{if~} 1< k\leq 2n,  \\
       \|f_{2n}-f_{1}\| & \mbox{if } k=1.
    \end{array}
\right.
$$ 
For $1< k\leq 2n$, we have
\begin{align*}
    (f_k-f_{k-1})((x,y))=x\left(\cos\frac{(2k-1)\pi}{n}-\cos\frac{(2k-3)\pi}{n}\right)\sec\frac{\pi}{2n}\\
    +y\left(\sin\frac{(2k-1)\pi}{n}-\sin\frac{(2k-3)\pi}{n}\right)\sec\frac{\pi}{2n}\\
    =\left\{-2x\sin\frac{(k-1)\pi}{n}\sin\frac{\pi}{2n}+2y\cos
    \frac{(k-1)\pi}{n}\cos\frac{\pi}{2n}\right\}\sec\frac{\pi}{2n}.
    \end{align*}
    To calculate the norm of $f_k-f_{k-1}$ (supremum over the unit sphere) we use the Krein-Milman theorem and restrict ourselves to extremal points. Thus we have for $1<k\leq 2n$
\begin{equation}\label{eq1}
\begin{array}{l}
\|f_k-f_{k-1}\|=\\
\max_{(x,y)\in \Ext B_X}\left|-2x\sin\frac{(k-1)\pi}{n}\sin\frac{\pi}{2n}+2y\cos\frac{(k-1)\pi}{n}\cos\frac{\pi}{2n}\right|\sec\frac{\pi}{2n}.
\end{array}    
\end{equation}
If $n$ is even then by taking $k=\frac{n}{2}+1$ in $(\ref{eq1})$, we get
$$
\|f_{\frac{n}{2}+1}-f_{\frac{n}{2}}\|=\max_{(x,y)\in \Ext B_X}2|x|\sin\frac{\pi}{2n}\sec\frac{\pi}{2n}.
$$
For $(x,y)\in \Ext B_X$, $|x|\leq 1$. Thus, $\|f_{\frac{n}{2}+1}-f_{\frac{n}{2}}\|=2 \tan \frac{\pi}{2n}$.

If $n$ is odd, then by taking $k=n+1$ in \eqref{eq1}, we get
\begin{equation}\label{eq2}
\|f_{n+1}-f_{n}\|=\max_{(x,y)\in \Ext B_X}2|y|\sin\frac{\pi}{2n}\sec\frac{\pi}{2n}.
\end{equation}
For $(x,y)\in \Ext B_X$, $|y|\leq \sin\frac{(n-1)\pi}{2n}$. Thus,
\begin{equation}\label{eq3}
\|f_{n+1}-f_{n}\|=2\sin\frac{(n-1)\pi}{2n}\sin\frac{\pi}{2n}\sec\frac{\pi}{2n}=2 \tan\frac{\pi}{2n}\sin\frac{(n-1)\pi}{2n}.
\end{equation}
Now, using \eqref{eq2}, \eqref{eq3}, symmetry and regularity of $S_X$, we get
$$
\mathcal{E}(X)={d(x_k)}= \left\{
    \begin{array}{ll}
        2 \tan \frac{\pi}{2n} & \mbox{if}~n~\mbox{is~ even},  \\
       2 \tan \frac{\pi}{2n} \sin\frac{(n-1)\pi}{2n} & \mbox{if}~n~\mbox{is ~odd. } 
    \end{array}
\right.
$$

\end{proof}
As an application of the above, we obtain the following result on approximate smoothness of a $2$-dimensional regular polyhedral Banach space $X$ with $2n$ extreme points.

\begin{proposition}
Let $X$ be a $2$-dimensional regular polyhedral Banach space with $2n$ extreme points ($n\geq 2$). Then the following hold true.
\begin{itemize}
    \item[(i)] $X$ is $\varepsilon$-smooth for $\varepsilon\in[2 \tan \frac{\pi}{2n},2)$, if $n$ is even.
    \item[(ii)] $X$ is $\varepsilon$-smooth for $\varepsilon\in[2  \tan \frac{\pi}{2n} \sin\frac{(n-1)\pi}{2n},2)$, if $n\geq 3$ is odd.
\end{itemize}
\end{proposition}

In the next result we give formulas for $ d(x)$, $x\in S_X$ and $\mathcal{E}(X)$ for a finite-dimensional polyhedral Banach space $X$. Note that in case of $X$ being a finite-dimensional polyhyderal Banach space, its dual $X^*$ is also a polyhedral Banach space with finitely many extreme points in $B_{X^*}$.

\begin{theorem}
Let $X$ be a finite-dimensional polyhedral Banach space and let $x\in S_X$. Then 
\begin{itemize}
    \item [(i)]
$d(x)=\max\{\|f_i-f_j\|:\ f_i,f_j\in \Ext B_{X^*}\ \mbox{such ~that}\ f_i,f_j\in J(x)\},
$

\item[(ii)] $
\mathcal{E}(X)=\max\{\|f_i-f_j\|:\ f_i,f_j\in \Ext B_{X^*}\ \mbox{such that}\ M_{f_i}^{+}\cap M_{f_j}^{+}\not=\emptyset\}.
$
\end{itemize}
\end{theorem}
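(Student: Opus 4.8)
The plan is to prove the two formulas separately, both relying on the finiteness of $\Ext B_{X^*}$, which is the key structural feature of a polyhedral Banach space and ensures that all the suprema involved are actually attained at extreme points.

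For the first formula concerning $\diam J(x)$ with $x\in\Ext B_X$, I would start from the observation that $J(x)$ is a nonempty convex subset of $S_{X^*}$. In a polyhedral space, $X^*$ is also polyhedral (the dual of a finite-dimensional polyhedral space is polyhedral), so $B_{X^*}$ has finitely many extreme points $f_1,\ldots,f_N$. The set $J(x)=\{f\in S_{X^*}:f(x)=1\}=M_x^+\cap S_{X^*}$ (identifying $x$ with its action as a functional on $X^*$ via the canonical embedding, which is an isometry in finite dimensions) is a face of $B_{X^*}$, hence is itself a polytope whose extreme points are exactly those extreme points of $B_{X^*}$ that lie in $J(x)$. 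Since the diameter of a polytope (indeed of any compact convex set) is attained between two of its extreme points --- the function $f\mapsto\|f-g\|$ is convex and thus maximized at an extreme point, and iterating the argument in the second variable --- we conclude $\diam J(x)=\max\{\|f_i-f_j\|: f_i,f_j\in\Ext B_{X^*},\ f_i,f_j\in J(x)\}$. The main technical point here is to justify that the extreme points of the face $J(x)$ coincide with the extreme points of $B_{X^*}$ contained in $J(x)$; this follows from the standard fact that a face of a polytope contains all extreme points of the ambient polytope that belong to it and has no others.

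For the formula for $\mathcal{E}(X)=\sup\{\diam J(x):x\in S_X\}$, I would first note that by the remark opening the previous theorem's proof, $\diam J(x)=0$ whenever $x\in S_X\smallsetminus\Ext B_X$ is not relevant here since we need the supremum over all of $S_X$; however the supremum is still governed by the extreme points of $B_X$ because $J$ is constant on the relative interiors of faces of $B_X$ and the diameter of $J(x)$ can only increase as $x$ moves to a lower-dimensional face (a vertex), where $J(x)$ is largest. Combining this with the first formula, $\mathcal{E}(X)=\max_{x\in\Ext B_X}\max\{\|f_i-f_j\|:f_i,f_j\in\Ext B_{X^*}\cap J(x)\}$. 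The condition $f_i,f_j\in J(x)$ for some common $x\in\Ext B_X$ means $f_i(x)=f_j(x)=1$, i.e. $x\in M_{f_i}^+\cap M_{f_j}^+$. Therefore a pair $(f_i,f_j)$ contributes to $\mathcal{E}(X)$ precisely when there exists such a common $x$, which is equivalent to $M_{f_i}^+\cap M_{f_j}^+\neq\emptyset$ (if the intersection is nonempty it contains an extreme point of $B_X$ by Krein--Milman applied to this face). This yields the stated expression for $\mathcal{E}(X)$.

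The step I expect to be the main obstacle is the careful verification that the maximizing pair for $\mathcal{E}(X)$ can be realized with a common norming point that is itself an extreme point of $B_X$ --- equivalently, that replacing ``there exists $x\in S_X$ with $f_i,f_j\in J(x)$'' by the condition $M_{f_i}^+\cap M_{f_j}^+\neq\emptyset$ loses nothing. This requires knowing that $M_{f_i}^+\cap M_{f_j}^+$, being an intersection of two faces of $B_X$, is itself a (possibly empty) face, and when nonempty contains an extreme point, so that the diameter computation over extreme-point norming functionals is unaffected. Once this combinatorial bookkeeping between faces of $B_X$ and faces of $B_{X^*}$ (the duality of faces for polytopes) is in place, both formulas follow by routine application of the convexity-of-diameter argument and Krein--Milman.
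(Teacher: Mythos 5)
Your proposal is correct and follows essentially the same route as the paper: reduce the computation of $\mathcal{E}(X)$ to vertices of $B_X$ by noting that $J(y)\subseteq J(x)$ for some extreme point $x$ of the face containing $y$, and then observe that $\diam J(x)$ is attained at extreme points of $B_{X^*}$ lying in $J(x)$. The only cosmetic difference is that you invoke the general polytope facts (faces, Krein--Milman, convexity of $f\mapsto\|f-g\|$) abstractly, whereas the paper carries out the same argument explicitly by writing $f,g\in J(x)$ as convex combinations of the facet functionals and applying the triangle inequality twice.
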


\begin{proof}
{(i)}   Let $x\in S_X$ and $F_{i_1},F_{i_2},\ldots,F_{i_k}$ be the facets of $S_X$ which contain $x$. Let $f_{i_1},f_{i_2},\ldots, f_{i_k}\in \Ext B_{X^*}$ be the unique supporting functionals for the facets $F_{i_1},F_{i_2},\ldots,F_{i_k}$, respectively.  Let $f,g\in J(x)$. Then $f=\sum_{j=1}^{k}\alpha_j f_{i_j}$, $g=\sum_{j=1}^{k}\beta_j f_{i_j}$, where $0\leq \alpha_j,\beta_j\leq 1$  and $\sum_{j=1}^k\alpha_j=1$, $\sum_{j=1}^k\beta_j=1$. Now,
\begin{eqnarray*}
    \|f-g\|&=&\left\|\sum_{j=1}^{k}\alpha_j f_{i_j}-g\right\|
           =\left\|\sum_{j=1}^{k}\alpha_j f_{i_j}-\sum_{j=1}^{k}\alpha_j g\right\|\\
           &=&\left\|\sum_{j=1}^{k}\alpha_j(f_{i_j}-g)\right\|
           \leq \sum_{j=1}^{k}\alpha_j\|f_{i_j}-g\|.
\end{eqnarray*}
Similar arguments show that $$
\|f_{i_j}-g\|\leq \sum_{\ell=1}^{k}\beta_\ell\|f_{i_j}-f_{i_\ell}\|\leq \max\limits_{1\leq \ell\leq  k}\|f_{i_j}-f_{i_\ell}\|
$$ 
for all $1\leq j\leq k$. Thus, $\|f-g\|\leq \max\limits_{1\leq j,\ell\leq  k}\|f_{i_j}-f_{i_\ell}\|$ and this proves the result.

(ii) Observe that if $y\in F$ for some facet $F$ and $f,g\in J(y)$ then $f,g\in J(x)$ for some $x\in \Ext B_X\cap F$. Thus to calculate $\mathcal{E}(X)$ it is sufficient to consider $x\in \Ext B_X$. Now, (ii) follows from (i) by observing that if $f,g\in J(x)$ for some $x\in \Ext B_X\cap F$ and facet $F$ then $\ M_{f}^{+}\cap M_{g}^{+}\not=\emptyset$. 
\end{proof}

\section{Direct sums}
Given normed linear spaces $X,Y$ we study the space $Z=X\oplus_p Y$, $1\leq p\leq \infty$, and approximate smoothness of its elements.
We start with a description of the set of supporting functionals.

\begin{proposition}\label{dirsum}
Let $X,Y$ be normed linear spaces and let $Z=X\oplus_p Y$ with $1\leq p\leq \infty$. 
\begin{enumerate}
\item\label{p}
If $1<p<\infty$ and $q$ is conjugated to $p$, then for any $x\in X\smallsetminus\{\theta\}$ and $y\in Y\smallsetminus\{\theta\}$ we have:
\begin{enumerate}
    \item\label{pa} \begin{equation}\label{dirsumeq1}
\begin{array}{l}
J((x,y))=\\
\left\{
\left(
\frac{\|x\|^{p-1}f}{(\|x\|^p+\|y\|^p)^{\frac{1}{q}}},\frac{\|y\|^{p-1}g}{(\|x\|^p+\|y\|^p)^{\frac{1}{q}}}
\right)\in S_{X^*\oplus_q Y^*}:\ 
f\in J(x),\
g\in J(y)
\right\},
\end{array}
\end{equation}
\item \label{p(b)}$J((x,\theta))=\{
(f,\theta)\in S_{X^*\oplus_q Y^*}:\ 
f\in J(x)
\}$,

\item\label{p(c)} $J((\theta,y))=\{
(\theta,g)\in S_{X^*\oplus_q Y^*}:\ 
g\in J(y)
\}$.
\end{enumerate}

\item\label{p1}
If $p=1$, $x\in X\smallsetminus\{\theta\}$, $y\in Y\smallsetminus\{\theta\}$, then
\begin{enumerate}

\item\label{p1c}
$J((x,y))=\{(f,g)\in S_{X^*\oplus_{\infty} Y^*}:\ f\in J(x),\ g\in J(y)\}$,

\item\label{p1a}

$J((x,\theta))=\{(f,g)\in S_{X^*\oplus_{\infty} Y^*}:\ f\in J(x),\ g\in B_{Y^*}\}, 
$
\item\label{p1b}
$
J((\theta,y))=\{(f,g)\in S_{X^*\oplus_{\infty} Y^*}:\ f\in B_{X^*},\ g\in J(y)\}.
$

\end{enumerate}
\item\label{pinfty}
Let $p=\infty$ and $(x,y)\in Z\smallsetminus\{\theta\}$.
\begin{enumerate}
\item\label{pinftya}
If $\|x\|>\|y\|$, then 
$$
J((x,y))=\{(f,\theta)\in S_{X^*\oplus_{1} Y^*}:\ f\in J(x)\}.
$$
\item\label{pinftyb}
If $\|x\|<\|y\|$, then
$$
J((x,y))=\{(\theta,g)\in S_{X^*\oplus_{1} Y^*}:\ g\in J(y)\}.
$$
\item\label{pinftyc}
If $\|x\|=\|y\|$, then
$$
\{(\alpha f, (1-\alpha)g): f\in J(x), g\in J(y), 0\leq\alpha\leq 1\}\subseteq J((x,y)).
$$
\end{enumerate}
\end{enumerate}
\end{proposition}
\begin{proof}

Ad \eqref{pa}. Let $F\in J((x,y))$, where $F=(f,g)\in S_{X^*\oplus_q Y^*}$ . Then using H\"older's inequality, we get
\begin{eqnarray*}
   ( \|x\|^p+\|y\|^p)^{\frac{1}{p}}&=&\|(x,y)\|=F((x,y))=f(x)+g(y)\\
   &\leq& \|f\|\|x\|+\|g\|\|y\|\\
   & \leq& (\|f\|^q+\|g\|^q)^{\frac{1}{q}} (\|x\|^p+\|y\|^p)^{\frac{1}{p}}\\
   &=&(\|x\|^p+\|y\|^p)^{\frac{1}{p}}.
\end{eqnarray*}

This shows that equality holds in H\"older's inequality and thus
\begin{equation}\label{one-two}
\|f\|=\frac{\|x\|^{p-1}}{(\|x\|^p+\|y\|^p)^{\frac{1}{q}}},~
\|g\|=\frac{\|y\|^{p-1}}{(\|x\|^p+\|y\|^p)^{\frac{1}{q}}}.
\end{equation}

Also, we get $f(x)=\|f\|\|x\|$, $g(y)=\|g\|\|y\|$ which implies that $\tilde{f}=\frac{f}{\|f\|}\in J(x)$ and $\tilde{g}=\frac{g}{\|g\|}\in J(y)$. Combining this with \eqref{one-two} we get 
$$
  F=(f,g)=\left(\frac{\|x\|^{p-1}\tilde{f}}{(\|x\|^p+\|y\|^p)^\frac{1}{q}}, \frac{\|y\|^{p-1}\tilde{g}}{(\|x\|^p+\|y\|^p)^\frac{1}{q}}\right)
$$
which shows $\subseteq$ in \eqref{dirsumeq1}. 
Now, consider 
$$
F=\left(\frac{\|x\|^{p-1}f}{(\|x\|^p+\|y\|^p)^\frac{1}{q}}, \frac{\|y\|^{p-1}g}{(\|x\|^p+\|y\|^p)^\frac{1}{q}}\right)\in S_{X^*\oplus_q Y^*},
$$ 
where $f\in J(x)$ and $g\in J(y)$. Then (we use that $(p-1)q=p$)
$$
F((x,y))=\frac{\|x\|^p+\|y\|^p}{(\|x\|^p+\|y\|^p)^\frac{1}{q}}=\|(x,y)\|.
$$
Thus $F\in J((x,y))$ and this proves $\supseteq$ in \eqref{dirsumeq1} and finishes the proof of part \eqref{pa}.

Parts \eqref{p(b)} and \eqref{p(c)} follow using the similar reasoning.


Ad \eqref{p1}. 
For the proof of \eqref{p1c} let $x\in X\setminus\{\theta\}$, $y\in Y\setminus\{\theta\}$, $f\in J(x)$ and $y\in J(y)$. Then $(f,g)\in S_{ X^*\oplus_\infty Y^*}$ and $(f,g)((x,y))=f(x)+g(y)=\|x\|+\|y\|=\|(x,y)\|$. Thus $(f,g)\in J((x,y))$. 

For the reverse, let $(f,g)\in J((x,y))$. Then $\|f\|\leq 1$, $\|g\|\leq 1$ and
\begin{eqnarray*}
 \|x\|+\|y\|&=&\|(x,y)\|=(f,g)((x,y))=f(x)+g(y)\\
 &\leq& \|f\|\|x\|+\|g\|\|y\|\leq \|x\|+\|y\|.
\end{eqnarray*}
This shows that $\|f\|=1$, $\|g\|=1$,  $f(x)=\|x\|$ and $g(y)=\|y\|$. Thus $f\in J(x)$ and $g\in J(y)$.

To prove \eqref{p1a} let $x\in X\setminus \{\theta\}$, $f\in J(x)$ and $g\in B_{Y^*}$. Then $(f,g)\in S_{ X^*\oplus_\infty Y^*}$ and $(f,g)((x,\theta))=f(x)=\|x\|=\|(x,\theta)\|$. Thus $(f,g)\in J((x,\theta))$. The reverse inclusion is clear.

The proof of \eqref{p1b} is analogous.


Ad \eqref{pinfty}. For the proof of \eqref{pinftya} let $x\in X\setminus \{\theta\}$, $y\in Y$ be such that $\|x\|>\|y\|$. Let $f\in J(x)$. Then clearly $(f,\theta)\in S_{X^*\oplus_1Y^*}$, $(f,\theta)((x,y))=f(x)=\|x\|=\|(x,y)\|$ and thus $(f,\theta)\in J((x,y))$.

Now, let $(f,g)\in J((x,y))$, where $g\in S_{Y^*}$. If $f=\theta$,  then 
$$
(f,g)((x,y))=g(y)\leq \|y\|<\|(x,y)\|.
$$ 
Thus $f\not=\theta$. Since $(f,g)\in S_{ X^*\oplus_1 Y^*}$, $f\not=\theta$, $g\not=\theta$, this gives $\|f\|<1$ and $\|g\|<1$.  Also, $\|f\|+\|g\|=1$ implies $\|g\|=1-\|f\|$. Thus 
\begin{eqnarray*}
\|x\|&=&\|(x,y)\|=(f,g)((x,y))=f(x)+g(y)\\
&\leq&\|f\|\|x\|+\|g\|\|y\|<\|f\|\|x\|+(1-\|f\|)\|x\|=\|x\|.
\end{eqnarray*} 
The obtained contradiction proves the required form of $J((x,y))$.

The proof of \eqref{pinftyb} is analogous.

Finally, we prove \eqref{pinftyc}. 
Let $x\in X\setminus \{\theta\}$, $y\in Y\setminus \{\theta\}$ be such that $\|x\|=\|y\|$. Let $f\in J(x)$, $g\in J(y)$ and $0\leq \alpha\leq 1$. Then $(\alpha f, (1-\alpha)g)\in S_{X^*\oplus_1Y^*}$ and $(\alpha f, (1-\alpha)g)((x,y))=\alpha f(x)+(1-\alpha)g(y)=\alpha\|x\|+(1-\alpha)\|y\|=\|x\|=\|(x,y)\|$. This proves the result.
\end{proof}

\begin{corollary}\label{dirsumcor}
Let $X,Y$ be normed linear spaces and let $Z=X\oplus_p Y$ with $1\leq p\leq \infty$.
\begin{enumerate}
\item\label{corp}
If $1<p<\infty$ and $q$ is conjugated to $p$, then for any $x\in X\smallsetminus\{\theta\}$ and $y\in Y\smallsetminus\{\theta\}$ we have:
\begin{enumerate}
    \item\label{corpa} 
\begin{equation}\label{formuladxy}
d((x,y))=\left(
\frac{\|x\|^{p}}{\|x\|^p+\|y\|^p}d(x)^{q}+\frac{\|y\|^{p}}{\|x\|^p+\|y\|^p}d(y)^{q}
\right)^{\frac{1}{q}},
\end{equation}
\item \label{corp(b)} $d((x,\theta))=d(x)$, 
\item\label{corp(c)} $d((\theta,y))=d(y)$.
\end{enumerate}

\item\label{corp1}
If $p=1$, $x\in X\smallsetminus\{\theta\}$, $y\in Y\smallsetminus\{\theta\}$, then
\begin{enumerate}
\item\label{corp1c}
$d((x,y))=\max\{d(x),d(y)\}$,
\item\label{corp1a}
$d((x,\theta))=2$,
\item\label{corp1b}
$J((\theta,y))=2$.
\end{enumerate}

\item\label{corpinfty}
Let $p=\infty$, $(x,y)\in Z\smallsetminus\{\theta\}$.
\begin{enumerate}
\item\label{corpinftya}
If $\|x\|>\|y\|$, then 
$d((x,y))=d(x)$.
\item\label{corpinftyb}
If $\|x\|<\|y\|$, then
$d((x,y))=d(y)$.
\item\label{corpinftyc}
If $\|x\|=\|y\|$, then
$d((x,y))=2$.
\end{enumerate}
\end{enumerate}
\end{corollary}

\begin{proof}
We will prove \eqref{corpa}. 
Let $F,G\in J((x,y))$. On account of \eqref{dirsumeq1}, there exist $f_x,g_x\in J(x)$, $f_y,g_y\in J(y)$ such that $$
F=\left(\frac{\|x\|^{p-1}f_x}{(\|x\|^p+\|y\|^p)^\frac{1}{q}}, \frac{\|y\|^{p-1}f_y}{(\|x\|^p+\|y\|^p)^\frac{1}{q}}\right)
$$
and
$$
G=\left(\frac{\|x\|^{p-1}g_x}{(\|x\|^p+\|y\|^p)^\frac{1}{q}}, \frac{\|y\|^{p-1}g_y}{(\|x\|^p+\|y\|^p)^\frac{1}{q}}\right).
$$ 
This gives (using $(p-1)q=p$)

$$
\|F-G\|^q= \frac{\|x\|^{p}}{\|x\|^p+\|y\|^p }\|f_x-g_x\|^q+\frac{\|y\|^{p}}{\|x\|^p+\|y\|^p }\|f_y-g_y\|^q
$$
and thus,
$$
d((x,y))^q \leq \frac{\|x\|^{p}}{\|x\|^p+\|y\|^p }d(x)^q+\frac{\|y\|^{p}}{\|x\|^p+\|y\|^p }d(y)^q.
$$
To prove the reverse inequality fix arbitrarily $\delta>0$ and choose $f_x^{\delta},g_x^{\delta}\in J(x)$ such that $\|f_x^{\delta}-g_x^{\delta}\|^q>d(x)^{q}-\delta$. Analogously, let $f_y^{\delta},g_y^{\delta}\in J(y)$ be chosen such that $\|f_y^{\delta}-g_y^{\delta}\|^q>d(y)^{q}-\delta$. Define
$$
F^{\delta}:=\left(\frac{\|x\|^{p-1}f_x^{\delta}}{(\|x\|^p+\|y\|^p)^\frac{1}{q}}, \frac{\|y\|^{p-1}f_y^{\delta}}{(\|x\|^p+\|y\|^p)^\frac{1}{q}}\right)
$$
and
$$
G^{\delta}:=\left(\frac{\|x\|^{p-1}g_x^{\delta}}{(\|x\|^p+\|y\|^p)^\frac{1}{q}}, \frac{\|y\|^{p-1}g_y^{\delta}}{(\|x\|^p+\|y\|^p)^\frac{1}{q}}\right).
$$ 
By \eqref{dirsumeq1}, $F^{\delta},G^{\delta}\in J((x,y))$ whence
\begin{eqnarray*}
d((x,y))^q&\geq&\|F^{\delta}-G^{\delta}\|^q\\
&>&\frac{\|x\|^{p}}{\|x\|^p+\|y\|^p }(d(x)^q-\delta)+\frac{\|y\|^{p}}{\|x\|^p+\|y\|^p}(d(y)^q-\delta)
\end{eqnarray*}
and since $\delta>0$ was arbitrary, we get
$$
d((x,y))^q\geq \frac{\|x\|^{p}}{\|x\|^p+\|y\|^p }d(x)^q+\frac{\|y\|^{p}}{\|x\|^p+\|y\|^p}d(y)^q.
$$
The proofs of other cases are similar or obvious. 
\end{proof}

Now, as a straightforward consequence of Corollary \ref{dirsumcor}, we characterize approximate smoothness of the direct sum. 

\begin{theorem}
Let $X$, $Y$ be normed linear spaces and $Z=X\oplus_p Y$, $1<p<\infty$. Then the following statements hold true:
\begin{itemize}
    \item [(i)] Let $x\in X\smallsetminus\{\theta\}$, $y\in Y\smallsetminus\{\theta\}$.

\begin{itemize}

\item[(a)]    
If $x$ is $\varepsilon_x$-smooth in $X$ and $y$ is $\varepsilon_y$-smooth in $Y$ for $\varepsilon_x,\varepsilon_y\in [0,2)$, then $(x,y)$ is $\varepsilon$-smooth in $Z$ with
$$
\varepsilon:=\left(
\frac{\|x\|^{p}}{\|x\|^p+\|y\|^p}\varepsilon_x^{q}+\frac{\|y\|^{p}}{\|x\|^p+\|y\|^p}\varepsilon_y^{q}\right)^{\frac{1}{q}}.
$$
\item[(b)]    
If $x$ is $\varepsilon$-smooth in $X$ and $y$ is $\varepsilon$-smooth in $Y$ for $\varepsilon\in [0,2)$, then $(x,y)$ is $\varepsilon$-smooth in $Z$.
\item[(c)]    
If $(x,y)$ is $\varepsilon$-smooth in $Z$ for $\varepsilon\in [0,2)$, then either $x$ is $\varepsilon$-smooth in $X$ or $y$ is $\varepsilon$-smooth in $Y$.
\item[(d)]
$(x,y)$ is approximately smooth if and only if either $x$ or $y$ is approximately smooth. 
\end{itemize}

\item [(ii)] Let $x\in X\smallsetminus\{\theta\}$. Then $(x,\theta)\in Z$ is $\varepsilon$-smooth for $\varepsilon\in[0,2)$ if and only if $x$ is $\varepsilon$-smooth in $X$.

\item [(iii)] Let $y\in Y\smallsetminus\{\theta\}$. Then $(\theta,y)\in Z$ is  $\varepsilon$-smooth for $\varepsilon\in[0,2)$ if and only if  $y$ is $\varepsilon$-smooth in $Y$.
\end{itemize}

\end{theorem}

\begin{proof}
For the proof of (i) we need to apply Corollary \ref{dirsumcor} \eqref{corpa} and the formula \eqref{formuladxy}. In particular, for (id) we observe that $d(x,y)<2$ if and only  if $d(x)<2$ or $d(y)<2$. 
The statements (ii) and (iii) immediately follow from Corollary \ref{dirsumcor} \eqref{corp(b)} and \eqref{corp(c)}, respectively. 
\end{proof}

\begin{theorem}
Let $X$, $Y$ be normed linear spaces and $Z=X\oplus_1 Y$. Then the following hold true:
\begin{itemize}
    
\item[(i)] 
   
\begin{itemize}
\item[(a)]    
If $x$ is $\varepsilon_x$-smooth in $X$ and $y$ is $\varepsilon_y$-smooth in $Y$ for $\varepsilon_x,\varepsilon_y\in [0,2)$, then $(x,y)$ is $\varepsilon$-smooth in $Z$ with
$\varepsilon:=\max\{\varepsilon_x,\varepsilon_y\}$.
\item[(b)]    
If $x$ is $\varepsilon$-smooth in $X$ and $y$ is $\varepsilon$-smooth in $Y$ for $\varepsilon\in [0,2)$, then $(x,y)$ is $\varepsilon$-smooth in $Z$.
\item[(c)]    
If $(x,y)$ is $\varepsilon$-smooth in $Z$ for $\varepsilon\in [0,2)$, then both $x$ and $y$ are $\varepsilon$-smooth in $X$ and $Y$, respectively.
\item[(d)]
$(x,y)$ is approximately smooth if and only if either $x$ or $y$ is approximately smooth. 
\end{itemize}

\item [(ii)] If $x\in X\smallsetminus\{\theta\}$ then $(x,\theta)\in Z$  cannot be approximately smooth.
\item [(iii)] If $y\in Y\smallsetminus\{\theta\}$ then $(\theta,y)\in Z$  cannot be approximately smooth.   
   
\end{itemize}
\end{theorem}

\begin{proof}

The proof of (i) relies on Corollary \ref{dirsumcor} \eqref{corp1c}.
Statements (ii) and (iii) follow immediately from \eqref{corp1a} and \eqref{corp1b}, respectively.
\end{proof}

\begin{theorem}
Let $X$, $Y$ be normed linear spaces, $Z=X\oplus_\infty Y$ and $z=(x,y)\in Z\smallsetminus\{\theta\}$. Then the following statements hold true:
\begin{itemize}
    \item [(i)] Let $\|x\|>\|y\|$.  Then  $z$ is $\varepsilon$-smooth in $Z$ for $\varepsilon\in[0,2)$ if and only if  $x$ is $\varepsilon$-smooth in $X$. 
    \item [(ii)] Let $\|x\|<\|y\|$. Then $z$ is $\varepsilon$-smooth in $Z$ for $\varepsilon\in[0,2)$ if and only if $y$ is $\varepsilon$-smooth in $Y$.
   \item  [(iii)] If $\|x\|=\|y\|$, then $z$ cannot be approximately smooth.
\end{itemize}
\end{theorem}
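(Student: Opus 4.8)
The plan is to read off all three statements directly from the description of $J((x,y))$ in Proposition~\ref{dirsum}-(\ref{pinfty}), together with the fact that the dual of $Z=X\oplus_\infty Y$ is $X^*\oplus_1 Y^*$, so that for functionals $F=(f_1,g_1)$ and $G=(f_2,g_2)$ one has $\|F-G\|=\|f_1-f_2\|+\|g_1-g_2\|$. Each case then reduces to a short computation of $\diam J(z)$.

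For (i) I would invoke Proposition~\ref{dirsum}-(\ref{pinftya}): when $\|x\|>\|y\|$, every element of $J(z)$ has the form $(f,\theta)$ with $f\in J(x)$. Hence for $F=(f_1,\theta)$ and $G=(f_2,\theta)$ in $J(z)$ I compute $\|F-G\|=\|f_1-f_2\|\le\diam J(x)\le\varepsilon_x$, which gives $\diam J(z)\le\varepsilon_x$ and so $z$ is $\varepsilon_x$-smooth. Part (ii) is the mirror image, using Proposition~\ref{dirsum}-(\ref{pinftyb}) with the roles of the two coordinates interchanged.

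For (iii) the argument runs in the opposite direction: I must exhibit two supporting functionals at $z$ that are at the maximal distance $2$. First I note that $\|x\|=\|y\|$ together with $z\ne\theta$ forces both $x\ne\theta$ and $y\ne\theta$, so that $J(x)$ and $J(y)$ are nonempty; pick $f\in J(x)$ and $g\in J(y)$. By the inclusion in Proposition~\ref{dirsum}-(\ref{pinftyc}) (taking $\alpha=1$ and $\alpha=0$ respectively), both $F:=(f,\theta)$ and $G:=(\theta,g)$ lie in $J(z)$, and $\|F-G\|=\|f\|+\|g\|=2$. Therefore $\diam J(z)=2$, and $z$ is not $\varepsilon$-smooth for any $\varepsilon\in[0,2)$.

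I do not anticipate a genuine obstacle here, since the entire content is carried by Proposition~\ref{dirsum}. The only point requiring a moment's care is that in (\ref{pinftyc}) the proposition asserts merely an inclusion $\subseteq$ rather than an equality; but this is exactly the direction needed, because to establish the lower bound $\diam J(z)=2$ it suffices to know that $J(z)$ contains the two functionals $(f,\theta)$ and $(\theta,g)$.
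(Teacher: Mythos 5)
Your proposal is correct and follows essentially the same route as the paper: parts (i) and (ii) read off $\diam J(z)$ from Proposition~\ref{dirsum}-(\ref{pinftya}) and (\ref{pinftyb}), and part (iii) takes $F=(f,\theta)$, $G=(\theta,g)$ from the inclusion in (\ref{pinftyc}) with $\alpha=1,0$ to get $\|F-G\|=2$, exactly as in the paper. Your added remark that $\|x\|=\|y\|$ and $z\neq\theta$ force both components to be nonzero, and that only the inclusion $\subseteq$ in (\ref{pinftyc}) is needed, are correct and slightly more careful than the paper's own wording.
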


\begin{proof}
Statements (i) and (ii) follows from Corollary \ref{dirsumcor} \eqref{corpinftya} and \eqref{corpinftyb}, whereas (iii) is a consequence of Corollary \ref{dirsumcor} \eqref{corpinftyc}.
\end{proof}

The following final example is an application of the above theorem and the fact that any $t\in \mathbb{R}\smallsetminus\{0\}$ is a smooth point of $\mathbb{R}$.

\begin{example}
Let $Z=X\oplus_\infty \mathbb{R}$ be a 3-dimensional polyhedral Banach space whose unit ball is a right prism with regular polyhedron $P$ as its base. Then the following hold true for $z=(x,t)\in Z\smallsetminus\{\theta\}$:
\begin{itemize}
    \item [(i)] Let $\|x\|>|t|$.  Then  $z$ is $\varepsilon$-smooth in $Z$ for $\varepsilon\in[0,2)$ if and only if  $x$ is $\varepsilon$-smooth in $X$.
    \item [(ii)] Let $\|x\|<|t|$. Then $z$ is smooth.
   \item  [(iii)] If $\|x\|=|t|$, then $z$ cannot be approximately smooth.
\end{itemize}
\end{example}
\section{Acknowledgments}
The research of Divya Khurana is sponsored by
Dr. D. S. Kothari Postdoctoral Fellowship under the mentorship of Professor Gadadhar Misra. She would also like to thank Professor Gideon Schechtman for discussion on direct sum of normed linear spaces. The research of Dr. Debmalya Sain is sponsored by DST-SERB under the mentorship of Professor Apoorva Khare. Dr. Sain is elated to acknowledge the monumental positive role played by the Department of Mathematics, Indian Institute of Science, for providing him with a rich mathematical setting.



\bibliographystyle{amsplain}

\end{document}